\pdfoutput=1
\documentclass[11pt]{article}

\usepackage[T1]{fontenc}
\usepackage[utf8]{inputenc}
\usepackage{lmodern}

\usepackage[margin=1in]{geometry}

\usepackage{amsmath,amssymb,amsthm}

\usepackage{microtype}
\usepackage[colorlinks=true,
            citecolor=blue,
            linkcolor=blue,
            urlcolor=blue]{hyperref}

\usepackage{enumitem}
\usepackage{booktabs}
\usepackage{listings}
\usepackage{xcolor}

\theoremstyle{plain}
\newtheorem{theorem}{Theorem}[section]
\newtheorem{lemma}[theorem]{Lemma}
\newtheorem{proposition}[theorem]{Proposition}
\newtheorem{corollary}[theorem]{Corollary}
\theoremstyle{definition}
\newtheorem{definition}[theorem]{Definition}

\theoremstyle{remark}
\newtheorem{remark}[theorem]{Remark}

\lstdefinestyle{lean}{%
  language={},
  basicstyle=\ttfamily\small,
  breaklines=true,
  breakatwhitespace=false,
  columns=flexible,
  keepspaces=true,
  showstringspaces=false,
  xleftmargin=2em,
  aboveskip=0.8\medskipamount,
  belowskip=0.8\medskipamount,
  mathescape=true,
}

\newcommand{\ZZ}{\mathbb{Z}}

\newcommand{\li}[1]{\texorpdfstring{\texttt{#1}}{#1}}
\newcommand{\lf}[1]{\texorpdfstring{\nolinkurl{#1}}{#1}}
\newcommand{\doi}[1]{\url{https://doi.org/#1}}

\title{A Prime-Generated Formalization of Nagata's\\
       Factoriality Theorem in Lean~4}
\author{
Arthur F.~Ramos\thanks{Corresponding author.}\\
Microsoft, USA\\
\texttt{arfreita@microsoft.com}
\and
Ruy J.~G.~B.~de~Queiroz\\
Centro de Inform\'atica, Universidade Federal de Pernambuco, Brazil\\
\texttt{ruy@cin.ufpe.br}
\and
Anjolina G.~de~Oliveira\\
Centro de Inform\'atica, Universidade Federal de Pernambuco, Brazil\\
\texttt{ago@cin.ufpe.br}
}
\date{}

\begin{document}
\maketitle


\begin{abstract}
We present, to our knowledge, the first public Lean~4\,/\,Mathlib
formalization of Nagata's factoriality theorem: if~$R$ is a noetherian domain
and $S\subseteq R$ is a prime-generated submonoid such that $S^{-1}R$ is a
UFD, then~$R$ is a UFD.  The development is organized around reusable transfer
lemmas and packages the theorem both for the concrete type
\li{Localization\,S} and at the abstract \li{IsLocalization} level.  As
applications, we formalize two structurally distinct Nagata-based proofs that
$R[X]$ is a UFD whenever~$R$ is a noetherian UFD: one via Laurent-polynomial
localization at powers of~$X$, and one via localization at the constant primes
and comparison with~$\operatorname{Frac}(R)[X]$.  The formalization also
replaces a degenerate ``prime-or-unit'' variant by the mathematically correct
prime-generated hypothesis exposed by the proof effort, and it yields the
iterated polynomial corollary $R[X][Y]$.
\end{abstract}

\paragraph{Keywords.}
Formalized mathematics, Lean~4, Mathlib, commutative algebra, localization,
unique factorization domains, Nagata's theorem.

\paragraph{MSC2020.}
Primary 03B35; Secondary 13A05, 13B30.

\section{Introduction}\label{sec:intro}

Nagata's factoriality theorem is a classical tool in commutative algebra that
recovers unique factorization in a base ring from unique factorization in a
suitable localization.  In a standard textbook
presentation~\cite{nagata1962,samuel1964,matsumura1989,kaplansky1970}, the
theorem states that if~$R$ is a noetherian integral domain, $S\subseteq R$ is
a multiplicative set whose elements factor into primes in~$R$, and the
localization~$S^{-1}R$ is a UFD, then $R$~itself is a UFD.  The argument
connects localization, irreducibility, primality, and noetherianity in a way
that is both algebraically instructive and technically nontrivial.

The theorem occupies a distinctive position in the landscape of formalization
targets.  It is short enough to be self-contained---the informal proof fits
comfortably on a single page---yet it requires careful coordination of several
pieces of algebraic infrastructure: localization of rings at a submonoid,
transfer of divisibility across the localization map, interaction between
irreducibility and primality in integral domains, and the characterization of
UFDs as noetherian domains in which every irreducible is prime.  It is also a
gateway result: one of its immediate applications is to prove that $R[X]$ is a
UFD whenever $R$~is, via localization at powers of the indeterminate.

\paragraph{Motivation and scope.}
We describe a Lean~4 formalization, built on Mathlib~\cite{mathlib}, that
packages a complete proof of Nagata's theorem together with a worked
application establishing unique factorization for polynomial rings.  The
formalization grew out of an attempt to close a gap in Mathlib's coverage of
classical factoriality results: while the library provides extensive
infrastructure for unique factorization, localization, and polynomial algebra,
no Nagata-style transfer theorem was available.

Nagata's theorem is an attractive formalization target for several reasons.
It is compact enough to be self-contained in a modest development, yet it
forces nontrivial interaction between three pillars of commutative algebra:
localization, the noetherian condition, and unique factorization.  The proof
demands transfer lemmas that move divisibility, irreducibility, and primality
across the localization map---exactly the kind of multi-step reasoning where
formalization provides high assurance.  Moreover, its immediate corollary for
polynomial rings makes it a practically useful building block.

During the formalization we found that an earlier prime-or-unit version of the
statement collapses for submonoids with more than one non-unit prime
generator.  The present development therefore adopts the mathematically correct
\emph{prime-generated} hypothesis and rebuilds the supporting machinery around
that formulation.  Section~\ref{sec:background} explains the algebraic issue,
and Section~\ref{sec:lessons} returns to the proof-engineering lesson.

\paragraph{Contributions.}
The main contributions are:
\begin{enumerate}[nosep]
\item To the best of our knowledge, the first public formalization of Nagata's
  factoriality theorem, stated under the correct prime-generated hypothesis on
  the multiplicative set~(\S\ref{sec:statement}).
\item A reusable theorem surface that combines concrete
  \li{Localization\,S} statements, abstract \li{IsLocalization} formulations,
  and packaged entry points for prime-generator closures
  ~(\S\ref{sec:statement}).
\item Two structurally distinct Nagata-based routes to the polynomial UFD
  theorem---via Laurent localization and via the fraction field---together with
  an iterated polynomial corollary obtained by reusing the same package
  ~(\S\ref{sec:application}).
\end{enumerate}
The development also shows how formalization can sharpen hypothesis design: the
shift from prime-or-unit to prime-generated was not cosmetic, but required to
state the theorem at the right level of generality.

\paragraph{Outline.}
Section~\ref{sec:background} reviews the mathematical background.
Section~\ref{sec:statement} gives the formal statement.
Section~\ref{sec:architecture} describes the proof architecture.
Section~\ref{sec:lean-statements} presents the key Lean declarations verbatim.
Section~\ref{sec:organization} discusses file organization and API design.
Section~\ref{sec:application} presents the two main applications.
Section~\ref{sec:lessons} draws proof-engineering and mathematical lessons.
Section~\ref{sec:related} surveys related work.
Section~\ref{sec:upstream} discusses prospective Mathlib contributions.
Section~\ref{sec:artifact} describes the artifact.
Section~\ref{sec:conclusion} concludes.

\section{Mathematical background}\label{sec:background}

We briefly recall the algebraic ingredients that enter the Nagata argument,
with an emphasis on the points most relevant to formalization.

\subsection{Localization at a multiplicative submonoid}

Let $R$ be a commutative ring and $S\subseteq R$ a multiplicative submonoid
(i.e., $1\in S$ and $S$ is closed under multiplication).  The
\emph{localization} of~$R$ at~$S$, written~$S^{-1}R$, is the ring of
fractions~$a/s$ with $a\in R$ and $s\in S$, subject to the equivalence
relation $a/s = b/t$ iff there exists $u\in S$ with $u(at-bs)=0$.  When $R$
is an integral domain and $0\notin S$, the relation simplifies to $at=bs$.

The ring homomorphism $\iota\colon R\to S^{-1}R$ sending $a\mapsto a/1$ is
injective precisely when $S$ consists of non-zero-divisors, which is automatic
for a domain with $0\notin S$.  Every $s\in S$ becomes a unit in $S^{-1}R$
since $(s/1)\cdot(1/s)=1$.  This is the key property that the Nagata argument
exploits: denominators from~$S$ can be ``cleared'' in~$S^{-1}R$.

\subsection{Unique factorization domains}

A commutative integral domain~$R$ is a \emph{unique factorization domain}
(UFD) if every nonzero non-unit element can be written as a finite product of
irreducible elements, and this factorization is essentially unique (up to
reordering and multiplication by units).  An equivalent characterization, which
is the one used in our formalization, is:

\begin{proposition}\label{prop:ufd-char}
A commutative integral domain~$R$ is a UFD if and only if:
\begin{enumerate}[nosep]
\item $R$ is a well-founded divisibility monoid \textup{(}i.e., every element
  admits a factorization into irreducibles\textup{)}, and
\item every irreducible element of~$R$ is prime.
\end{enumerate}
\end{proposition}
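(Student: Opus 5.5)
We prove the two implications separately, with existence of factorizations handled by the well-founded divisibility condition and uniqueness by primality of irreducibles.

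\emph{Forward direction.} Assume $R$ is a UFD. Condition~(1) holds by definition, since every nonzero non-unit is a finite product of irreducibles. For condition~(2), let $p$ be irreducible and suppose $p\mid ab$ with $a,b\neq 0$; the cases $a=0$ or $b=0$ are trivial. Write $ab=pc$ and factor $a$, $b$, $c$ into irreducibles, allowing empty products for units. By uniqueness of the factorization of $ab$, the factor $p$ of $pc$ is associate to some irreducible factor of $a$ or of $b$. Hence $p\mid a$ or $p\mid b$, so $p$ is prime.

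\emph{Reverse direction.} Assume (1) and (2). Existence of factorizations is exactly~(1): well-foundedness of strict divisibility on nonzero elements lets us argue by well-founded induction. A nonzero non-unit $x$ is either irreducible, or $x=yz$ with $y,z$ proper non-unit divisors. In the latter case $y$ and $z$ are strictly smaller, so by induction they factor, and the product of their factorizations factors $x$.

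The main step is uniqueness. Suppose $p_1\cdots p_m \sim q_1\cdots q_n$, with all $p_i,q_j$ irreducible. We induct on $m$.
\begin{itemize}
\item If $m=0$, the right-hand side is a unit, so $n=0$, since irreducibles are non-units.
\item Otherwise $p_1$ is prime by~(2) and divides $q_1\cdots q_n$, so $p_1\mid q_j$ for some $j$.
\item Because $q_j$ is irreducible and $p_1$ is a non-unit, we get $q_j\sim p_1$.
\item Cancelling $p_1$, which is legitimate in a domain since $p_1\neq0$, gives $p_2\cdots p_m\sim\prod_{k\neq j}q_k$.
\item The induction hypothesis then matches the remaining factors up to permutation and associates.
\end{itemize}

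The only delicate bookkeeping is this permutation/associate matching, which is cleanest to carry out in the multiset formulation of factorizations. In Lean this whole argument corresponds to Mathlib's existing characterization of \texttt{UniqueFactorizationMonoid} via \texttt{WfDvdMonoid} together with irreducible-implies-prime, so we expect to invoke that rather than reprove it.
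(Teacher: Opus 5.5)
Your forward proof of (2) and your whole reverse direction are the standard textbook arguments, and they are correct. The paper gives no separate mathematical proof. Mathlib's \texttt{UniqueFactorizationMonoid} is built directly on \texttt{WfDvdMonoid} together with ``irreducible $\Leftrightarrow$ prime'', so \li{ufd\_iff\_factorization\_and\_irreducibles\_prime} essentially repackages that definition, as your last sentence anticipates. Your uniqueness induction corresponds to what the paper records separately as \li{factorization\_unique}.

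One step does not go through as written. In the forward direction you justify (1) only by the existence of factorizations into irreducibles. In the reverse direction, however, you use (1) as well-foundedness of strict divisibility. That is how the paper's formal statement reads it, and it is in general strictly stronger than mere existence of factorizations; the parenthetical ``i.e.''\ in the statement blurs this. So under the reading you actually use, you still owe the fact that a UFD has no infinite chain of proper divisors.

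The repair is short. For $x\neq 0$ let $\ell(x)$ be the number of irreducible factors of $x$ (zero for units). Uniqueness makes $\ell$ well defined and additive: $\ell(yz)=\ell(y)+\ell(z)$ for $y,z\neq 0$. If $a\neq 0$ is a proper divisor of $b\neq 0$, say $b=ac$ with $c$ a non-unit, then $c$ is a nonzero non-unit, so $\ell(c)\geq 1$ and $\ell(b)>\ell(a)$. Every term of a chain of proper divisors after the first is nonzero, so $\ell$ strictly decreases along the chain and the chain is finite. With this added, your proof is complete. Alternatively, read (1) consistently as mere existence of factorizations and drop the well-founded induction.
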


\noindent
Condition~(1) is automatic in any noetherian domain, since the ascending chain
condition prevents infinite chains of proper divisors.  The Nagata argument
therefore reduces to establishing condition~(2): every irreducible is prime.

\subsection{The classical Nagata argument}\label{sec:classical}

Let $R$ be a noetherian integral domain, $S\subseteq R$ a submonoid whose
elements factor into primes, and suppose $S^{-1}R$ is a UFD.  We must show
that every irreducible $p\in R$ is prime.  Two cases arise.

\emph{Case~1: $p$ divides some $s\in S$.}  Because $s$ factors as a product
of primes belonging to~$S$ and $p$ is irreducible, $p$~must be associate to
one of those prime factors, hence prime.

\emph{Case~2: $p$ does not divide any element of~$S$.}  Then the image $p/1$
in $S^{-1}R$ is not a unit and remains irreducible.  Since $S^{-1}R$ is a UFD,
$p/1$ is prime.  One then pulls primality back: if $p\mid ab$ in~$R$, then
$p/1\mid (ab)/1$ in $S^{-1}R$, so $p/1\mid a/1$ or $p/1\mid b/1$.  Lifting
this divisibility to~$R$ (using avoidance to clear denominators) gives
$p\mid a$ or $p\mid b$.

Each step involves nontrivial interaction with the localization machinery.  It
is the careful disentangling of these interactions into modular, reusable
lemmas that constitutes the main formal engineering contribution.

\subsection{Prime-generated vs.\ prime-or-unit}\label{sec:degeneracy}

The hypothesis that ``every element of~$S$ factors as a product of primes
belonging to~$S$'' is sometimes stated more simply: ``every element of~$S$ is
prime or a unit.''  This simpler form suffices for the
localization~$S=\{1,p,p^2,\ldots\}$ generated by a single prime~$p$, and
appears in some textbook presentations that only need this case.

However, for a general submonoid the prime-or-unit condition is far too
restrictive.  If $p$ and~$q$ are two distinct non-unit primes in~$S$, then
$pq\in S$ (since $S$ is a submonoid) but $pq$ is neither prime nor a unit.
Thus the prime-or-unit condition can hold only when $S$ is generated by at
most one prime up to associates together with the group of units---a severe
restriction that excludes most interesting applications.

The \emph{prime-generated} condition avoids this collapse by allowing products:
it asks that every $s\in S$ factor as a finite product of elements that are
individually prime and individually members of~$S$.  In a localization
argument, this is exactly what one needs: to clear a denominator $s\in S$ from
a fraction, one factors $s=q_1\cdots q_n$ into primes, and because each $q_i$
becomes a unit in~$S^{-1}R$, so does~$s$.

\begin{remark}\label{rem:formal-pg}
The prime-generated condition is formalized using Lean's \li{Multiset} type to
represent the finite product, avoiding ordering issues that would arise with
lists.  The formal predicate is:
\[
  \mathit{PrimeGenerated}(S) \;\stackrel{\mathrm{def}}{=}\;
  \forall\, s\in S,\;\exists\, f:\mathrm{Multiset}(R),\;
  \bigl(\forall\, q\in f,\;q\in S \wedge \mathrm{Prime}(q)\bigr)
  \;\wedge\; \textstyle\prod f = s.
\]
\end{remark}

\section{Formal statement}\label{sec:statement}

We work over a commutative ring~$R$ that is both an integral domain and
noetherian.

\begin{definition}[Prime-generated submonoid]\label{def:primegen}
A submonoid~$S\subseteq R$ is \emph{prime-generated} if every $s\in S$ can be
written as a finite product $s=q_1\cdots q_n$ where each~$q_i$ satisfies
$q_i\in S$ and $\mathrm{Prime}(q_i)$ in~$R$.  The empty product yields
$s=1$, covering the unit element of the submonoid.
\end{definition}

\begin{definition}[Avoidance]\label{def:avoids}
An element $p\in R$ \emph{avoids} a submonoid~$S$ if $p$ does not divide any
element of~$S$:
\[
  \mathit{Avoids}(S,p) \;\stackrel{\mathrm{def}}{=}\;
  \forall\, s\in S,\;\neg\,(p\mid s).
\]
\end{definition}

\noindent
The avoidance predicate captures the condition under which the image of~$p$ in
$S^{-1}R$ cannot be a unit.  It is a hypothesis of several transfer lemmas and
arises naturally from the case analysis in the key lemma.

\begin{theorem}[Nagata]\label{thm:nagata}
Let $R$ be a noetherian integral domain and $S\subseteq R$ a prime-generated
submonoid.  If the localization~$S^{-1}R$ is a UFD, then $R$~is a UFD.
\end{theorem}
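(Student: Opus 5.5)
We reduce to Proposition~\ref{prop:ufd-char}. Since $R$ is noetherian, divisibility is well founded: an infinite chain of strict proper divisors $a_1, a_2, \ldots$ would give a strictly ascending chain of principal ideals $(a_1)\subsetrel (a_2)\subsetrel\cdots$. Hence condition~(1) holds, and it remains to show that every irreducible $p\in R$ is prime. We fix such a $p$ and split on whether $\mathit{Avoids}(S,p)$ holds, following the classical argument of Section~\ref{sec:classical}.

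\emph{Case 1: $p\mid s$ for some $s\in S$.} By Definition~\ref{def:primegen}, write $s=\prod f$ with each $q\in f$ prime and in $S$. We induct on the multiset $f$ to prove the following: if an irreducible $p$ divides $\prod f$, then $p$ is associated to some $q\in f$.
\begin{itemize}
\item The empty case gives $p\mid 1$, contradicting that $p$ is not a unit.
\item In the step $p\mid q\cdot\prod f'$, write $q\,\prod f' = p\,c$. Then $q\mid pc$, so by primality of $q$ either $q\mid p$ or $q\mid c$.
\item If $q\mid p$, irreducibility of $p$ together with $q$ being a non-unit forces $p\sim q$.
\item If $q\mid c$, say $c=qc'$, cancel $q$ in the domain to get $\prod f' = p\,c'$, and apply the induction hypothesis.
\end{itemize}
Since associates of primes are prime, $p$ is prime.

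\emph{Case 2: $p$ avoids $S$.} Write $\iota\colon R\to S^{-1}R$. We need three transfer lemmas.

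\textbf{(a) $\iota(p)$ is not a unit.} If $\iota(p)\cdot a/s=1$, then $pa=s$ (injectivity in a domain), so $p\mid s$, contradicting avoidance.

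\textbf{(b) $\iota(p)$ is irreducible.} Suppose $\iota(p)=(a/s)(b/t)$. Then $pst=ab$ in $R$. Factor $st$ into primes $q_i\in S$. Each $q_i$ divides $ab$, hence divides $a$ or $b$; it cannot divide $p$, since then $p\sim q_i$ would divide an element of $S$. Cancelling the $q_i$ one at a time (induction on the multiset) yields $p=a'b'$ with $a=ua'$ and $b=vb'$ for some $u,v$ that are products of elements of $S$, hence units in $S^{-1}R$. Irreducibility of $p$ makes $a'$ or $b'$ a unit in $R$, so $a/s$ or $b/t$ is a unit in $S^{-1}R$.

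\textbf{(c) Descent of divisibility.} If $\iota(p)\mid\iota(a)$, then $a\,s=p\,c$ for some $s\in S$, $c\in R$. Again factor $s$ into primes of $S$, none of which divides $p$, so each divides $c$. Cancel them inductively to get $p\mid a$.

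With (a)--(c) in hand, Case~2 follows. Since $S^{-1}R$ is a UFD, (a) and (b) make $\iota(p)$ prime. Then $p\mid ab$ implies $\iota(p)\mid\iota(a)\iota(b)$, so $\iota(p)$ divides $\iota(a)$ or $\iota(b)$, and (c) gives $p\mid a$ or $p\mid b$. Also $p\neq 0$ and $p$ is a non-unit, so $p$ is prime.

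The main obstacle is (b), and secondarily (c). Both rest on one cancellation lemma, which we will isolate and prove first by multiset induction:
\begin{quote}
If $\prod f \mid x\,y$, every element of $f$ is prime, and no element of $f$ divides $x$, then $\prod f\mid y$.
\end{quote}
A symmetric splitting version handles (b): it distributes the primes of $f$ between $a$ and $b$ and tracks the resulting $S$-cofactors. Getting the bookkeeping of units and fractions right here is the delicate part. Once the lemma is in place, the remaining steps are routine manipulations of fractions in the domain $S^{-1}R$.
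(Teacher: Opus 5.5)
Your proposal is correct and follows the paper's proof essentially step for step. The matching steps are: noetherianity for well-founded divisibility, the case split on avoidance, and the multiset induction showing that an irreducible dividing a product of primes is associate to one of them. In the avoiding case it also matches the paper: transfer of non-unitness and irreducibility by distributing the prime factors of $st$ between $a$ and $b$, then the denominator-clearing descent of divisibility. The one point you should state explicitly is that $0\notin S$, which holds because a product of primes in a domain is nonzero; you use it silently when you cancel elements of $S$ after cross-multiplying and when you treat $S^{-1}R$ as a domain.
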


\noindent
The formal statement appears as \li{nagata\_theorem} in the file
\lf{Nagata/Theorem.lean}.  Its proof invokes two ingredients: the fact that a
noetherian domain is a well-founded divisibility monoid (giving
condition~(1) of Proposition~\ref{prop:ufd-char}), and the key lemma
(giving condition~(2)).

\paragraph{The prime-generator packaging.}
The repository also packages a convenient family of corollaries.  The lemma
\li{primeGenerated\_closure\_of\_primes} shows that if $\mathcal{P}$ is any
set of prime elements of~$R$, then the submonoid
$\mathrm{closure}(\mathcal{P})$ is automatically prime-generated.
As a consequence, \lf{Nagata/Theorem.lean} exports two additional entry points:
\begin{itemize}[nosep]
\item \li{nagata\_theorem\_of\_prime\_generators} specializes
  Theorem~\ref{thm:nagata} to localizations at submonoids generated by
  arbitrary sets of primes.
\item \li{nagata\_theorem\_of\_finite\_prime\_generators} covers the especially
  common finite-generator case.
\end{itemize}
These give the repository a broader mathematical surface than a single
isolated theorem statement.

\begin{remark}[The prime-or-unit variant]\label{rem:prime-or-unit}
The earlier prime-or-unit variant is retained as
\li{nagata\_theorem\_of\_prime\_or\_unit} in the same file.  It is proved
\emph{separately}, with its own chain of transfer lemmas that work under the
hypothesis $\forall\,s\in S,\;\mathrm{Prime}(s)\lor\mathrm{IsUnit}(s)$.  In
particular, it is \emph{not} derived as a corollary of the prime-generated
theorem; the two proof lines are independent.

This design reflects the fact that the prime-or-unit transfer lemmas are
simpler (they do not require multiset induction) and may be useful in their own
right for the single-generator case.  Retaining both chains also means that the
compatibility theorem can be checked without depending on the more complex
prime-generated infrastructure.
\end{remark}

\section{Proof architecture}\label{sec:architecture}

The proof proceeds in four layers, each providing reusable components for the
next.  We describe these layers bottom-up, giving both the mathematical content
and the names of the corresponding Lean declarations.

\subsection{Layer~1: prime-generated submonoid API}\label{sec:layer1}

The file \lf{Nagata/Lemmas.lean} introduces the \li{PrimeGenerated} and
\li{Avoids} predicates (Definitions~\ref{def:primegen}
and~\ref{def:avoids}).  The basic consequences established at this layer are:

\begin{description}[nosep,leftmargin=1em,style=nextline]
\item[\li{zero\_notMem\_of\_primeGenerated}.]
  A prime-generated submonoid does not contain zero.  If $0\in S$ then $0$
  would factor as a product of primes, but a product of primes is nonzero.

\item[\li{primeGenerated\_powers}.]
  If $p$ is prime, then the submonoid $\{1,p,p^2,\ldots\}$ is
  prime-generated.  The factorization of~$p^n$ is simply $n$ copies of~$p$.

\item[\li{primeGenerated\_closure\_of\_primes}.]
  If every element of a set $\mathcal{P}$ is prime, then the submonoid
  closure of~$\mathcal{P}$ is prime-generated.  This is the packaging lemma
  that turns the main theorem into a ready-made criterion for localizations at
  submonoids generated by prime families.

\item[\li{multiset\_prod\_mem\_of\_factors}.]
  If every element of a multiset belongs to a submonoid~$S$, then so does
  the product.
\end{description}

\noindent
These results are straightforward but foundational.  In particular, the
zero-exclusion lemma is needed to establish that $S^{-1}R$ is an integral
domain, since Mathlib requires $0\notin S$ as a \li{Fact} instance for the
domain structure on the localization.

\subsection{Layer~2: conditional transfer lemmas}\label{sec:layer2}

The bulk of the formal development consists of lemmas that transfer
divisibility and factorization properties between~$R$ and~$S^{-1}R$.  Each
lemma relies on a specific combination of the prime-generated hypothesis and an
avoidance hypothesis.

\begin{lemma}[Irreducibles dividing prime factorizations are prime]%
\label{lem:irred-dvd-primes}
Let $f$ be a multiset of prime elements of~$R$, and let $p\in R$ be
irreducible.  If $p\mid\prod f$, then $p$ is prime.
\end{lemma}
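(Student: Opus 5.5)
We argue by induction on the multiset $f$, proving the statement in the form: for every multiset $f$ of primes and every irreducible $p$, if $p\mid\prod f$ then $p$ is prime. For the base case $f=0$, the hypothesis reads $p\mid 1$, so $p$ is a unit. This contradicts irreducibility, since irreducible elements are non-units, and the case closes vacuously.

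For the inductive step $f = q ::\! f'$, we have $p\mid q\cdot\prod f'$, and $q$ is prime. We first ask whether $p\mid\prod f'$. If so, the induction hypothesis applied to $f'$ gives that $p$ is prime. Otherwise $p\nmid \prod f'$, and we aim to show that $p$ is associated to $q$. Write $q\cdot \prod f' = p\,c$. Since $q$ is prime and $q\mid p c$, either $q\mid p$ or $q\mid c$.

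\emph{Case $q\mid p$.} Write $p = q d$. Irreducibility of $p$ forces $q$ or $d$ to be a unit. The factor $q$ is not a unit because it is prime, so $d$ is a unit and $p$ is associated to $q$. Primality is invariant under associates, so $p$ is prime.

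\emph{Case $q\mid c$.} Write $c = q e$. Then $q\prod f' = p q e$, and cancelling the nonzero $q$ in the domain $R$ gives $\prod f' = p e$. This contradicts $p\nmid\prod f'$.

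A cleaner alternative avoids the case split on $p\mid\prod f'$ altogether. By induction we show that an irreducible $p$ dividing $\prod f$ is associated to some member of $f$; the lemma then follows because associates of primes are prime. The inductive step is the same prime-$q$ dichotomy: either $q\mid p$, which gives association via irreducibility, or $q\mid c$, which after cancellation gives $p\mid\prod f'$ so that we can recurse. This is the version I would formalize.

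The main obstacle I expect is bookkeeping rather than mathematics. One point is handling $\prod(q::f') = q\cdot\prod f'$ under \li{Multiset.induction_on}. The other is the cancellation step, which needs $q\neq 0$ (from \li{Prime.ne_zero}) together with \li{mul_left_cancel₀}. The association argument in the case $q\mid p$ should be a direct application of \li{Irreducible.isUnit_or_isUnit} plus \li{Prime.not_unit}.
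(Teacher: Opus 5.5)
Your proof is correct and is essentially the paper's argument. Both induct on $f$, get a contradiction with irreducibility in the base case, and in the inductive step use primality of the head $q$ to obtain either $q\mid p$ (so $p$ is associate to $q$, hence prime) or $q$ dividing the cofactor (cancel $q$ and apply the induction hypothesis). Your preliminary case split on $p\mid\prod f'$ is harmless but unnecessary, as you note, and your ``cleaner alternative'' matches the paper's proof sketch exactly.
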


\begin{proof}[Proof sketch]
By induction on~$f$.  If $f$ is empty then $p\mid 1$, contradicting
irreducibility.  If $f=q\mathbin{::}f'$, then $q\mid p\cdot d$ for some~$d$,
and primality of~$q$ gives $q\mid p$ or $q\mid d$.  In the first case, $q$
and~$p$ are associates, so $p$ is prime.  In the second case, cancel~$q$ and
apply the induction hypothesis.
\end{proof}

\noindent
The Lean name is
\li{prime\_of\_irreducible\_of\_dvd\_prime\_factors}.  A direct consequence is:

\begin{lemma}\label{lem:irred-dvd-mem}
If $S$ is prime-generated, $p$ is irreducible, $s\in S$, and $p\mid s$, then
$p$ is prime.
\end{lemma}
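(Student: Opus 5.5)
This lemma follows directly from Lemma~\ref{lem:irred-dvd-primes} once we unfold the prime-generated hypothesis. Given $s\in S$, Definition~\ref{def:primegen} (formally, the predicate of Remark~\ref{rem:formal-pg}) supplies a multiset $f$ with $\prod f = s$ and with every $q\in f$ satisfying $q\in S$ and $\mathrm{Prime}(q)$.

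The plan has three steps:
\begin{enumerate}[nosep]
\item Obtain this multiset $f$ from the hypothesis $\mathit{PrimeGenerated}(S)$ applied to $s$.
\item Keep only the primality conjunct, $\forall\,q\in f,\ \mathrm{Prime}(q)$; membership in $S$ is not needed.
\item Rewrite the divisibility $p\mid s$ as $p\mid\prod f$ using the equation $\prod f = s$.
\end{enumerate}
At this point every hypothesis of Lemma~\ref{lem:irred-dvd-primes} holds: $f$ is a multiset of primes, $p$ is irreducible, and $p$ divides $\prod f$. Invoking that lemma (\li{prime\_of\_irreducible\_of\_dvd\_prime\_factors}) gives that $p$ is prime.

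All the mathematical work sits in Lemma~\ref{lem:irred-dvd-primes}, so there is no real obstacle here. The only delicate point in the formal version is bookkeeping. The rewrite should go in the direction $s = \prod f$, so that the goal matches the conclusion of the earlier lemma. The primality component has to be projected out of the conjunction $q\in S\wedge\mathrm{Prime}(q)$, for example by \texttt{fun q hq => (hf q hq).2}. In Lean this should amount to a short term-mode proof: destructure the existential, \texttt{rw} or \texttt{subst} the product equation, and apply the earlier lemma.
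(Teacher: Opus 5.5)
Your proposal is correct and matches the paper's argument exactly: unfold the prime-generated hypothesis at $s$ to obtain a multiset of primes with product $s$, turn $p\mid s$ into $p\mid\prod f$, and apply Lemma~\ref{lem:irred-dvd-primes}. One small wording point: the rewrite acts on the hypothesis $p\mid s$, not on the goal, using the equation $\prod f = s$ read right-to-left, or simply \texttt{subst}.
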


\noindent
(Lean name:
\li{prime\_of\_irreducible\_of\_dvd\_mem\_primeGenerated}.)
This combines the previous lemma with the prime factorization of~$s$ supplied
by the prime-generated hypothesis.

\medskip

\begin{lemma}[Lifting divisibility from the localization]%
\label{lem:lift-dvd}
Let $S$ be prime-generated, $p$ irreducible, and suppose $p$ avoids~$S$.  If
$p/1\mid a/1$ in $S^{-1}R$, then $p\mid a$ in~$R$.
\end{lemma}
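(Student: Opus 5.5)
We first unfold the divisibility in the localization into an equation in~$R$. If $p/1\mid a/1$ in $S^{-1}R$, there is some $x/s$ with $s\in S$ such that $a/1=(p/1)(x/s)$. Since $R$ is a domain and $0\notin S$ (by \li{zero\_notMem\_of\_primeGenerated}), the localization relation simplifies. This yields an equation $s\,a = p\,x$ in~$R$, possibly after absorbing an extra factor $u\in S$ into~$s$, which stays in~$S$. So the task reduces to the following claim: whenever $s\in S$ and $p\mid s a$, we have $p\mid a$.

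To prove this claim we use the prime-generated hypothesis. Write $s=\prod f$, where $f$ is a multiset whose elements are prime and lie in~$S$. We then prove, by multiset induction on~$f$, the statement: for all $b\in R$, if $p\mid (\prod f)\,b$ then $p\mid b$.

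The base case $f=0$ is immediate, since $\prod f=1$.

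For the inductive step $f=q\mathbin{::}f'$, suppose $p\mid q\cdot(\prod f')\,b$, say $q(\prod f')b=p\,y$. Since $q$ is prime and divides $p\,y$, either $q\mid p$ or $q\mid y$.

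\emph{Case $q\mid p$.} Because $p$ is irreducible and $q$ is not a unit, $q$ and $p$ are associates. Then $p\mid q$, and $q\in S$, which contradicts $\mathit{Avoids}(S,p)$.

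\emph{Case $q\mid y$.} Write $y=q\,y'$ and cancel $q$, which is nonzero because it is prime. This gives $(\prod f')b=p\,y'$, and the induction hypothesis yields $p\mid b$.

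Applying the claim with $b=a$ finishes the proof.

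The main obstacle is the first step in its formal form: extracting a clean equation $s a=p x$ from divisibility in \li{Localization\,S}. In Lean I would use \li{IsLocalization.surj} to write the witness as $x/s$, and then \li{IsLocalization.injective} (or \li{IsLocalization.eq\_iff\_exists}) to pull the equation back to~$R$. The factor $u\in S$ that appears there must be multiplied into~$s$ using submonoid closure. The inductive argument itself is routine; it mirrors the cancellation pattern of Lemma~\ref{lem:irred-dvd-primes}, with avoidance used in place of the associate conclusion.
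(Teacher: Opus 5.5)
Your proposal is correct and follows essentially the same route as the paper. Both extract $s\in S$ and $c$ with $sa=pc$, factor $s$ into primes lying in $S$, and then peel those primes off by multiset induction (the paper's Lemma~\ref{lem:clear-denom}), using the primality of each $q$ to push it onto the cofactor. Your explicit treatment of the case $q\mid p$ also supplies the step the paper's sketch passes over when it turns $p\nmid q_i$ into $q_i\nmid p$: irreducibility of $p$ makes $q$ and $p$ associates, so $p\mid q\in S$, contradicting avoidance.
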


\begin{proof}[Proof sketch]
From the divisibility in the localization one obtains $s\in S$ and $c\in R$
with $s\cdot a = p\cdot c$.  Factor $s=q_1\cdots q_n$ into primes from~$S$.
Because $p$ avoids~$S$, we have $p\nmid q_i$ for each~$i$.  An auxiliary
result then peels off the prime factors one at a time: at each step, the
prime~$q_i$ divides $p\cdot(\text{something})$ but cannot divide~$p$, so by
primality of~$q_i$ it divides the cofactor.  After all factors are consumed,
$p\mid a$.
\end{proof}

\noindent
Lean name: \li{dvd\_of\_localization\_dvd\_primeGenerated}.

\medskip

\begin{lemma}[Transfer of irreducibility]\label{lem:transfer-irred}
Let $S$ be prime-generated, $p$ irreducible, and suppose $p$ avoids~$S$.  Then
$p/1$ is irreducible in $S^{-1}R$.
\end{lemma}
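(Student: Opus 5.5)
To show $p/1$ is irreducible in $S^{-1}R$ we must establish two things: $p/1$ is not a unit, and whenever $p/1 = x\cdot y$ in $S^{-1}R$, one of $x,y$ is a unit.

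\emph{Non-unit.} Suppose $p/1$ is a unit. Then there are $a\in R$ and $s\in S$ with $(p/1)(a/s)=1$. Since $R$ is a domain and $0\notin S$ (by \li{zero\_notMem\_of\_primeGenerated}), this gives $pa=s$. Hence $p\mid s$, which contradicts the avoidance hypothesis.

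\emph{Factorizations.} Write $x=a/s$ and $y=b/t$ with $s,t\in S$. Clearing denominators gives $p\,st = ab$ in $R$. Factor $st=q_1\cdots q_n$ into primes from $S$, using the prime-generated hypothesis. I will then run the same peeling induction as in Lemma~\ref{lem:lift-dvd}, but peel from the other side. Each $q_i$ divides $ab$, so by primality $q_i\mid a$ or $q_i\mid b$. Cancelling $q_i$ from that factor gives $p\cdot(\text{remaining product}) = a'b'$, where $a'/1$ and $a/1$ differ by a unit of $S^{-1}R$ (because $q_i\in S$), and likewise for $b'$ and $b$. After all factors are consumed we reach $p=a''b''$ in $R$, with $a''$ associate in $S^{-1}R$ to $a/1$ and $b''$ associate to $b/1$. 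Concretely, I will prove by multiset induction the auxiliary statement
\[
  \forall\,a\,b,\; p\textstyle\prod f = ab \implies \exists\,a'\,b'\,u\,v\in S,\; p=a'b' \wedge a=a'u \wedge b=b'v .
\]
Irreducibility of $p$ in $R$ then makes $a''$ or $b''$ a unit of $R$, so its image is a unit in $S^{-1}R$. Consequently $a/1$ or $b/1$ is a unit, and since $s$ and $t$ are units in $S^{-1}R$, $x$ or $y$ is a unit.

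The main obstacle is carrying this bookkeeping through the induction: at each step we must track which side each prime was absorbed into and accumulate the products $u,v\in S$. A simpler alternative that avoids it is to use the fact that each prime $q\in S$ becomes a unit in $S^{-1}R$. Then it suffices to show that, for $p=a'b'$ in the reduced form, the factors divisible by primes of $S$ only differ by units. This would rely on \li{multiset\_prod\_mem\_of\_factors} for the membership $u,v\in S$. I expect avoidance to be needed only in the non-unit part.
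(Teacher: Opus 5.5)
Your proposal is correct and follows the paper's proof. For non-unitness, $p/1$ being a unit forces $p$ to divide an element of $S$, contradicting avoidance. For factorizations, you clear denominators to get $p\cdot st=ab$, then run a multiset induction over the prime factors of $st$ (your auxiliary statement plays the role of \li{split\_prime\_factors\_of\_mul\_eq}) to reduce to $p=a'b'$ up to units from $S$, where irreducibility of $p$ finishes; you are also right that avoidance is needed only in the non-unit part.
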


\begin{proof}[Proof sketch]
First, $p/1$ is not a unit: otherwise $p/1\mid 1/1$, which would lift to $p$
dividing some element of~$S$, contradicting avoidance.  Second, if
$p/1=(a/s)(b/t)$ in $S^{-1}R$, one clears denominators to get
$p\cdot(st)=ab$ in~$R$.  The product $st\in S$ factors as a multiset of
primes; the auxiliary lemma \li{split\_prime\_factors\_of\_mul\_eq} partitions
these factors between the two sides, reducing the equation to $p=a'b'$ modulo
units from~$S$.  Irreducibility of~$p$ forces one factor to be a unit.
\end{proof}

\noindent
Lean name:
\li{localization\_irreducible\_of\_irreducible\_primeGenerated}.
The auxiliary \li{split\_prime\_factors\_of\_mul\_eq} is the longest single
lemma in the development: it partitions a multiset of prime factors across the
two sides of a product equation by induction on the multiset, using primality
of each factor to decide which side absorbs it.

\medskip

\begin{lemma}[Transfer of primality]\label{lem:transfer-prime}
Let $S$ be prime-generated, $p$ irreducible, $p$ avoids~$S$, and $p/1$ prime
in $S^{-1}R$.  Then $p$ is prime in~$R$.
\end{lemma}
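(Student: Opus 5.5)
To prove Lemma~\ref{lem:transfer-prime}, we verify the three clauses of $\mathrm{Prime}(p)$ in~$R$ directly, using Lemma~\ref{lem:lift-dvd} as the only nontrivial ingredient.

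\emph{Nonzero and non-unit.} These follow from irreducibility of~$p$. Irreducible elements are non-units by definition. In a domain an irreducible element is also nonzero, since $0 = 0\cdot 0$ with $0$ not a unit; alternatively, $p\neq 0$ follows from avoidance, because $p=0$ would divide $0$, and it suffices to note that $0\mid s$ forces $s=0\notin S$.

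\emph{The divisibility clause.} Suppose $p\mid ab$ in~$R$, say $ab = pc$. Apply the ring homomorphism $\iota\colon R\to S^{-1}R$ (\li{algebraMap}). This gives $\iota(a)\,\iota(b) = \iota(p)\,\iota(c)$, hence $\iota(p)\mid \iota(a)\,\iota(b)$. Since $\iota(p)=p/1$ is prime in $S^{-1}R$ by hypothesis, we get $p/1\mid a/1$ or $p/1\mid b/1$. In either case Lemma~\ref{lem:lift-dvd} applies: $S$ is prime-generated, $p$ is irreducible and $p$ avoids~$S$. It yields $p\mid a$ or $p\mid b$ in~$R$, which completes the argument.

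The only place requiring care is the formal glue rather than the mathematics. One must ensure that the divisibility statement produced by \li{map\_dvd} matches the exact form expected by Lemma~\ref{lem:lift-dvd}, i.e.\ divisibility between images under \li{algebraMap R (Localization S)}. One must also have the domain instance on $S^{-1}R$ available (via \li{zero\_notMem\_of\_primeGenerated}) if primality there is stated in a form that needs it. With Lemma~\ref{lem:lift-dvd} already in hand, we expect no genuine obstacle: the heavy lifting, namely clearing a denominator $s\in S$ by peeling off its prime factors using avoidance, has already been done there.
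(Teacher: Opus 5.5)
Your proof is correct and follows the paper's argument: push $p\mid ab$ into $S^{-1}R$, use primality of $p/1$ to get $p/1\mid a/1$ or $p/1\mid b/1$, and pull back with Lemma~\ref{lem:lift-dvd}, with the nonzero and non-unit clauses coming from irreducibility. One small correction to an aside: your ``alternative'' derivation of $p\neq 0$ from avoidance does not work, because $0\notin S$ means $p=0$ satisfies avoidance, so rely on irreducibility (or on $p/1$ being prime) for that clause.
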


\begin{proof}[Proof sketch]
If $p\mid ab$ in~$R$, then $p/1\mid(ab)/1$ in $S^{-1}R$.  Primality of $p/1$
gives $p/1\mid a/1$ or $p/1\mid b/1$.  Lemma~\ref{lem:lift-dvd} lifts the
divisibility back to~$R$.
\end{proof}

\noindent
Lean name:
\li{prime\_of\_localization\_prime\_primeGenerated}.

\paragraph{The denominator-clearing helper.}
Underlying both the divisibility lift and the irreducibility transfer is an
auxiliary lemma that merits explicit mention:

\begin{lemma}[\li{dvd\_of\_mul\_eq\_prime\_factors}]\label{lem:clear-denom}
Let $f$ be a multiset of prime elements of~$R$, let $p$ be irreducible, and
suppose $p$ does not divide any element of~$f$.  If\/ $(\prod f)\cdot a =
p\cdot c$, then $p\mid a$.
\end{lemma}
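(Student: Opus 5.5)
We prove the statement by induction on the multiset~$f$, generalizing over~$a$ and~$c$, so that the induction hypothesis is available for arbitrary cofactors. Primality of the elements of~$f$ and the hypothesis that $p$ divides no element of~$f$ are inherited by every sub-multiset, so they pass to the inductive step unchanged.

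\emph{Base case.} If $f$ is empty, then $\prod f=1$ and the hypothesis reads $a=p\cdot c$. Hence $p\mid a$ directly.

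\emph{Inductive step.} Suppose $f=q\mathbin{::}f'$, with $q$ prime and $p\nmid q$. The hypothesis becomes
\[
  q\cdot\bigl((\textstyle\prod f')\cdot a\bigr)=p\cdot c,
\]
so $q\mid p\cdot c$. By primality of~$q$, either $q\mid p$ or $q\mid c$. We treat the two cases separately.
\begin{enumerate}[nosep]
\item Suppose $q\mid p$, say $p=q\,e$. Irreducibility of~$p$ forces $q$ or $e$ to be a unit. The element $q$ cannot be a unit, being prime. So $e$ is a unit, $q$ and~$p$ are associates, and in particular $p\mid q$. This contradicts the assumption that $p$ divides no element of~$f$, so this case cannot occur.
\item Suppose $q\mid c$, say $c=q\,c'$. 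Then
\[
  q\cdot\bigl((\textstyle\prod f')\cdot a\bigr)=q\cdot(p\,c').
\]
Since $q\neq 0$ (primes are nonzero) and $R$ is a domain, we may cancel~$q$. This gives $(\prod f')\cdot a=p\cdot c'$, and the induction hypothesis, applied with $c'$ in place of~$c$, yields $p\mid a$.
\end{enumerate}

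We expect the only delicate point to be the first case. There one must rule out $q\mid p$ by turning it into $p\mid q$ through the associate argument, which is exactly where irreducibility of~$p$ (and not merely $p\neq 0$) is used. Formally, this is the step that requires Mathlib's lemma that an irreducible element divisible by a non-unit is associated to it.

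The remaining steps are bookkeeping: rewriting \li{Multiset.prod\_cons}, reassociating products, and invoking \li{mul\_left\_cancel₀}. These are routine once the induction is set up with $c$ generalized.
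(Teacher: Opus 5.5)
Your proof is correct and follows the paper's argument: induction on $f$ with base case $1\cdot a=p\cdot c$; in the step, primality of the head $q$ gives $q\mid c$, you cancel $q$, and you apply the induction hypothesis with $c'$. You are more careful than the paper's sketch on one point, which attributes $q\nmid p$ directly to the hypothesis even though the hypothesis literally gives $p\nmid q$; your associate argument, using irreducibility of $p$ and the fact that $q$ is not a unit, is exactly what bridges the two.
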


\noindent
The proof proceeds by induction on~$f$.  In the base case the equation
$1\cdot a = p\cdot c$ gives $p\mid a$ directly.  In the inductive step, the
head prime $q$ divides $p\cdot c$; since $q$ is prime and $q\nmid p$
(by hypothesis), we have $q\mid c$, say $c=q\cdot c'$.  Canceling~$q$ from
both sides yields $(\prod f')\cdot a = p\cdot c'$, and the induction
hypothesis applies.

This lemma is invoked whenever a denominator from~$S$ must be ``cleared''
from a divisibility relation in~$R$.  Its correctness depends critically on
the fact that each factor of the denominator is prime (so that primality can
redirect the divisibility) and that the irreducible~$p$ does not divide any of
those factors (the avoidance hypothesis at work one factor at a time).

\paragraph{Comparing the two proof chains.}
As noted in Section~\ref{sec:statement}, the development maintains two
independent chains of transfer lemmas: one for the prime-generated hypothesis,
one for the prime-or-unit hypothesis.  Table~\ref{tab:chains} summarizes the
correspondence.

\begin{table}[ht]
\centering
\caption{Correspondence between the two chains of transfer lemmas.  Each row
shows a pair of lemmas that prove the same mathematical statement under
different hypotheses on the submonoid~$S$.}\label{tab:chains}
\smallskip
{\small
\begin{tabular}{@{}p{6.4cm}p{5.4cm}@{}}
\toprule
\textbf{Prime-generated chain} & \textbf{Prime-or-unit chain} \\
\midrule
\li{dvd\_of\_localization\_dvd\_\allowbreak{}primeGenerated}
  & \li{dvd\_of\_localization\_dvd} \\[3pt]
\li{localization\_irreducible\_\allowbreak{}of\_irreducible\_primeGenerated}
  & \li{localization\_irreducible\_\allowbreak{}of\_irreducible} \\[3pt]
\li{prime\_of\_localization\_prime\_\allowbreak{}primeGenerated}
  & \li{prime\_of\_localization\_\allowbreak{}prime} \\[3pt]
\li{nagata\_key\_lemma\_\allowbreak{}primeGenerated}
  & \li{nagata\_key\_lemma} \\
\bottomrule
\end{tabular}
}
\end{table}

\noindent
The prime-or-unit chain avoids multiset induction entirely: because every
element of~$S$ is individually prime or a unit, there is no need to factor a
denominator into multiple primes.  Instead, the proofs use a direct
case-by-case argument on each denominator.  This makes the prime-or-unit proofs
shorter and more elementary, but they apply only to submonoids generated by at
most one prime up to associates.

The dual-chain design was a deliberate engineering choice.  The prime-or-unit
proofs served as a ``warm-up'' implementation that validated the overall proof
strategy before the more complex multiset-inductive arguments were attempted.
Retaining both chains in the final artifact also provides a pedagogical
benefit: a reader can study the simpler chain first to understand the structure,
then see how each step generalizes when prime factorizations of denominators are
needed.

\subsection{Layer~3: the key lemma}\label{sec:layer3}

\begin{lemma}\label{lem:key}
Let $S$ be a prime-generated submonoid of a noetherian integral domain~$R$.  If
$S^{-1}R$ is a UFD, then every irreducible element of~$R$ is prime.
\end{lemma}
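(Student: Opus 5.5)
Fix an irreducible $p\in R$; we must show that $p$ is prime. The plan is the classical case split of Section~\ref{sec:classical}, discharged by the transfer lemmas of Layer~2. We decide classically whether $\mathit{Avoids}(S,p)$ holds. If it fails, there is some $s\in S$ with $p\mid s$. Lemma~\ref{lem:irred-dvd-mem} then applies directly, using the prime-generated hypothesis, irreducibility of $p$, $s\in S$ and $p\mid s$, and concludes that $p$ is prime. In Lean this is a \li{by\_cases} on \li{Avoids S p}, with the negation unfolded via \li{not\_forall} and \li{not\_not} to extract the witness $s$.

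In the avoiding case, we first record the instances needed to treat $S^{-1}R$ as a UFD. By \li{zero\_notMem\_of\_primeGenerated} we have $0\notin S$, which supplies the \li{Fact} instance making the localization an integral domain. Next, Lemma~\ref{lem:transfer-irred} shows that $p/1$ is irreducible in $S^{-1}R$. Since $S^{-1}R$ is a UFD, irreducible elements there are prime (Mathlib's \li{Irreducible.prime} under a \li{UniqueFactorizationMonoid} instance), so $p/1$ is prime. Finally, Lemma~\ref{lem:transfer-prime}, fed with the prime-generated hypothesis, irreducibility of $p$, avoidance and primality of $p/1$, yields that $p$ is prime in $R$. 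Noetherianity of $R$ plays no role here; it is needed only in the main theorem for the well-founded divisibility condition of Proposition~\ref{prop:ufd-char}.

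The mathematical content is thus entirely delegated to Lemmas~\ref{lem:irred-dvd-mem}, \ref{lem:transfer-irred} and~\ref{lem:transfer-prime}. The expected obstacle is bookkeeping: we must make the domain and UFD instances on \li{Localization\,S} line up with the hypotheses used by the transfer lemmas. In particular, the \li{IsDomain} instance derived from $0\notin S$ must be the one under which the UFD hypothesis is stated, so that \li{Irreducible.prime} typechecks. I would first try introducing \li{haveI : Fact (0 \notin S)} before invoking the transfer lemmas. If the instances diverge, I would state the lemma for an abstract \li{IsLocalization} target carrying its own domain and UFD structure, and pass the algebra map explicitly.
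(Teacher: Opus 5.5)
Your proposal is correct and matches the paper's proof step for step. The paper also splits on whether $p$ divides some $s\in S$, which Lemma~\ref{lem:irred-dvd-mem} handles, or avoids $S$, which is handled by Lemma~\ref{lem:transfer-irred}, then the UFD property of $S^{-1}R$, then Lemma~\ref{lem:transfer-prime}; your observation that noetherianity plays no role here is also accurate, since the paper uses it only for the \li{WfDvdMonoid} condition in the final packaging.
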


\noindent
Lean name: \li{nagata\_key\_lemma\_primeGenerated}.  The proof combines the
transfer lemmas via a case split on an irreducible element $p\in R$:
\begin{enumerate}[nosep]
\item If $p$ divides some $s\in S$, then $p$ is prime by
  Lemma~\ref{lem:irred-dvd-mem}.
\item If $p$ avoids~$S$, then $p/1$ is irreducible in $S^{-1}R$ by
  Lemma~\ref{lem:transfer-irred}.  Since $S^{-1}R$ is a UFD, every
  irreducible in $S^{-1}R$ is prime, so $p/1$ is prime.  Then $p$ is prime by
  Lemma~\ref{lem:transfer-prime}.
\end{enumerate}
This case split is the structural heart of the Nagata argument.  Every other
formal result in the development serves to make one of these two branches go
through.

\subsection{Layer~4: final packaging}\label{sec:layer4}

The public theorem \li{nagata\_theorem} (Theorem~\ref{thm:nagata}) combines the
key lemma with the standard UFD characterization
(Proposition~\ref{prop:ufd-char}):
\begin{enumerate}[nosep]
\item Condition~(1)---every element factors into irreducibles---holds because
  $R$ is noetherian (via a \li{WfDvdMonoid} instance).
\item Condition~(2)---every irreducible is prime---is the key lemma.
\end{enumerate}
The packaging lemma \li{ufd\_of\_factorization\_and\_primes} (in
\lf{Basic/UFD.lean}) assembles these two conditions into a
\li{UniqueFactorizationMonoid} instance.  The user supplies a noetherian
domain, a prime-generated submonoid, and a UFD instance on the localization,
and receives a UFD instance on the base ring.

\paragraph{UFD characterization and uniqueness.}
The file \lf{Basic/UFD.lean} provides the foundational interface between our
development and Mathlib's \li{UniqueFactorizationMonoid} class.  The key result
is the biconditional characterization:
\[
  R \text{ is a UFD}
  \;\;\Longleftrightarrow\;\;
  R \text{ is a \textnormal{\li{WfDvdMonoid}}}
  \;\wedge\;
  \forall\, p\in R,\;\mathrm{Irreducible}(p)\Rightarrow\mathrm{Prime}(p).
\]
This is formalized as:
\begin{center}
\li{ufd\_iff\_factorization\_and\_irreducibles\_prime}
\end{center}
As a consequence, the reverse implication---that irreducible implies prime in
a UFD---is recorded as \li{prime\_iff\_irreducible\_of\_ufd}, completing the
loop that the Nagata argument exploits in Case~2 of the key lemma.

The file also includes explicit proofs of factorization uniqueness.
The theorem \li{factorization\_\allowbreak{}unique} shows that two factorizations
of the same element into irreducibles are related by a bijection
that pairs associate elements.  This result is not needed for
the Nagata argument itself---it only establishes the \emph{existence}
of the UFD structure---but it serves two purposes: it validates
the correctness of the UFD characterization, and it provides a
standalone formal proof of uniqueness that can be cited independently.

\section{Formal Lean statements}\label{sec:lean-statements}

Following the convention of recent formalization papers (e.g.,
\cite{brasca-flt,riou-derived,loeffler-zeta}), we include the Lean~4 source
text for the central definitions and theorems.  All declarations shown below
are kernel-checked; the full machine-readable development is available in the
accompanying artifact.

\subsection{Core predicates}

The two predicates that underpin the entire development are defined in
\lf{Nagata/Lemmas.lean}:

\begin{lstlisting}[style=lean]
def PrimeGenerated {$\alpha$ : Type*} [CommRing $\alpha$] (S : Submonoid $\alpha$) : Prop :=
  $\forall$ s : $\alpha$, s $\in$ S $\to$ $\exists$ f : Multiset $\alpha$, ($\forall$ q $\in$ f, q $\in$ S $\wedge$ Prime q) $\wedge$ f.prod = s

def Avoids {$\alpha$ : Type*} [CommRing $\alpha$] [IsDomain $\alpha$]
    (S : Submonoid $\alpha$) (p : $\alpha$) : Prop :=
  $\forall$ s : $\alpha$, s $\in$ S $\to$ $\neg$ p $\mid$ s
\end{lstlisting}

\noindent
The \li{PrimeGenerated} predicate uses \li{Multiset} rather than \li{List} to
represent the factorization, eliminating any dependence on the ordering of
factors.  The \li{Avoids} predicate is stated as a universal negation over
submonoid membership, which integrates naturally with Lean's \li{Classical.em}
for the case split in the key lemma.

\subsection{The main theorem and its variants}

The public theorem surface resides in \lf{Nagata/Theorem.lean}:

\begin{lstlisting}[style=lean]
theorem nagata_theorem
    {R : Type*} [CommRing R] [IsDomain R] [IsNoetherianRing R]
    (S : Submonoid R) (hS : PrimeGenerated S)
    (hUFD : @UniqueFactorizationMonoid (Localization S) $\langle\cdots\rangle$) :
    UniqueFactorizationMonoid R
\end{lstlisting}

\noindent
The explicit \li{@UniqueFactorizationMonoid (Localization S) \ldots} is
necessary because Lean must be told which \li{CommMonoidWithZero} instance to
use for the localization.  The instance is constructed in a \li{by} block that
first registers $0\notin S$ as a \li{Fact} and then invokes typeclass
inference.  We elide this block as ``$\langle\cdots\rangle$'' in the listings
below for readability; the full text is in the artifact.

The abstract front-end is:

\begin{lstlisting}[style=lean]
theorem nagata_theorem_isLocalization
    {R T : Type*} [CommRing R] [IsDomain R] [IsNoetherianRing R]
    (S : Submonoid R) [CommRing T] [Algebra R T]
    [IsLocalization S T] [CancelCommMonoidWithZero T]
    (hS : PrimeGenerated S) (hUFD : UniqueFactorizationMonoid T) :
    UniqueFactorizationMonoid R
\end{lstlisting}

\noindent
The proof core still descends through the concrete localization
\li{Localization\,S}, but this wrapper lets downstream applications work with
any ring already known to be an \li{IsLocalization} of~$R$ at~$S$.  The UFD
structure is transported along \li{IsLocalization.algEquiv}, so the abstraction
boundary is isolated in the theorem surface rather than duplicated throughout
the transfer lemmas.

The prime-generator packaging family:

\begin{lstlisting}[style=lean]
theorem nagata_theorem_of_prime_generators
    {R : Type*} [CommRing R] [IsDomain R] [IsNoetherianRing R]
    (s : Set R) (hs : $\forall$ q $\in$ s, Prime q)
    (hUFD : @UniqueFactorizationMonoid
              (Localization (Submonoid.closure s)) $\langle\cdots\rangle$) :
    UniqueFactorizationMonoid R

theorem nagata_theorem_of_prime_generators_isLocalization
    {R T : Type*} [CommRing R] [IsDomain R] [IsNoetherianRing R]
    (s : Set R) [CommRing T] [Algebra R T]
    [IsLocalization (Submonoid.closure s) T]
    [CancelCommMonoidWithZero T]
    (hs : $\forall$ q $\in$ s, Prime q)
    (hUFD : UniqueFactorizationMonoid T) :
    UniqueFactorizationMonoid R

theorem nagata_theorem_of_finite_prime_generators
    {R : Type*} [CommRing R] [IsDomain R] [IsNoetherianRing R]
    (s : Finset R) (hs : $\forall$ q $\in$ s, Prime q)
    (hUFD : @UniqueFactorizationMonoid
              (Localization (Submonoid.closure ($\uparrow$s : Set R))) $\langle\cdots\rangle$) :
    UniqueFactorizationMonoid R

theorem nagata_theorem_of_finite_prime_generators_isLocalization
    {R T : Type*} [CommRing R] [IsDomain R] [IsNoetherianRing R]
    (s : Finset R) [CommRing T] [Algebra R T]
    [IsLocalization (Submonoid.closure ($\uparrow$s : Set R)) T]
    [CancelCommMonoidWithZero T]
    (hs : $\forall$ q $\in$ s, Prime q)
    (hUFD : UniqueFactorizationMonoid T) :
    UniqueFactorizationMonoid R
\end{lstlisting}

\noindent
These entry points relieve the user from constructing the
\li{PrimeGenerated} witness manually: the intermediate lemma
\li{primeGenerated\_closure\_of\_primes} handles the closure argument
internally.  The compatibility names
\li{nagata\_theorem\_of\_prime\_or\_unit} and
\li{nagata\_theorem\_of\_prime\_or\_unit\_isLocalization} remain available for
the restricted legacy hypothesis.

\subsection{Key transfer lemmas}

The transfer lemmas that constitute the technical core (Layer~2 of the proof
architecture) have the following types:

\begin{lstlisting}[style=lean]
-- Lifting divisibility from the localization
theorem dvd_of_localization_dvd_primeGenerated
    {$\alpha$ : Type*} [CommRing $\alpha$] [IsDomain $\alpha$]
    {S : Submonoid $\alpha$} (hS : PrimeGenerated S)
    {p a : $\alpha$} (hp : Irreducible p) (havoid : Avoids S p)
    (hdiv : Localization.of p $\mid$ Localization.of a) : p $\mid$ a

-- Transfer of irreducibility
theorem localization_irreducible_of_irreducible_primeGenerated
    {$\alpha$ : Type*} [CommRing $\alpha$] [IsDomain $\alpha$]
    {S : Submonoid $\alpha$} (hS : PrimeGenerated S)
    {p : $\alpha$} (hp : Irreducible p) (havoid : Avoids S p) :
    Irreducible (Localization.of p)

-- Transfer of primality
theorem prime_of_localization_prime_primeGenerated
    {$\alpha$ : Type*} [CommRing $\alpha$] [IsDomain $\alpha$]
    {S : Submonoid $\alpha$} (hS : PrimeGenerated S)
    {p : $\alpha$} (hp : Irreducible p) (havoid : Avoids S p)
    (hploc : Prime (Localization.of p)) : Prime p
\end{lstlisting}

\noindent
Note the uniform pattern: each transfer lemma takes the \li{PrimeGenerated}
hypothesis, an \li{Irreducible} witness, and the \li{Avoids} hypothesis, then
produces a factorization-theoretic conclusion.  The modularity of this
interface is what makes the key lemma's case split clean.

\subsection{The application chain}

The application theorems in \lf{Applications/Laurent.lean} read:

\begin{lstlisting}[style=lean]
theorem laurentPolynomial_uniqueFactorizationMonoid
    {R : Type*} [CommRing R] [IsDomain R]
    [UniqueFactorizationMonoid R] :
    UniqueFactorizationMonoid R[T;T$^{-1}$]

theorem polynomial_uniqueFactorizationMonoid_of_laurent
    {R : Type*} [CommRing R] [IsDomain R]
    [IsNoetherianRing R] [UniqueFactorizationMonoid R[T;T$^{-1}$]] :
    UniqueFactorizationMonoid R[X]

theorem polynomial_uniqueFactorizationMonoid_via_nagata
    {R : Type*} [CommRing R] [IsDomain R]
    [IsNoetherianRing R] [UniqueFactorizationMonoid R] :
    UniqueFactorizationMonoid R[X]

theorem polynomial_uniqueFactorizationMonoid_via_fractionField
    {R : Type*} [CommRing R] [IsDomain R]
    [UniqueFactorizationMonoid R] [IsNoetherianRing R] :
    UniqueFactorizationMonoid R[X]

theorem iterated_polynomial_uniqueFactorizationMonoid_via_nagata
    {R : Type*} [CommRing R] [IsDomain R]
    [IsNoetherianRing R] [UniqueFactorizationMonoid R] :
    UniqueFactorizationMonoid (Polynomial (Polynomial R))
\end{lstlisting}

\noindent
The third theorem is the basic composite: from a noetherian UFD~$R$, it first
constructs the Laurent UFD instance and then descends via the Nagata theorem
to obtain a UFD instance on~$R[X]$.  The fourth theorem gives a second
Nagata-based proof of the same polynomial UFD result by localizing at the
constant primes and comparing the resulting localization with
$\operatorname{Frac}(R)[X]$.  The fifth theorem then iterates the Laurent-based
construction once more, reusing the same package to produce a UFD structure on
$R[X][Y]$.

\section{File organization and API design}\label{sec:organization}

The Lean development is organized into four top-level modules.
Table~\ref{tab:files} summarizes the structure.

\begin{table}[ht]
\centering
\caption{File structure of the formalization.}\label{tab:files}
\smallskip
\begin{tabular}{@{}llp{7.2cm}@{}}
\toprule
\textbf{Module} & \textbf{File} & \textbf{Contents} \\
\midrule
\texttt{Basic/}
  & \texttt{Ring.lean}
  & Mathlib imports for ring infrastructure \\
  & \texttt{Divisibility.lean}
  & Divisibility, associatedness, prime--irreducible bridge \\
  & \texttt{Noetherian.lean}
  & \li{WfDvdMonoid} from noetherianity \\
  & \texttt{UFD.lean}
  & UFD characterization \\
\midrule
\texttt{Localization/}
  & \texttt{MultSet.lean}
  & Submonoid utilities (zero exclusion) \\
  & \texttt{Localization.lean}
  & Localization wrapper API \\
  & \texttt{IsLocalization.lean}
  & Helper lemmas for the abstract localization interface \\
  & \texttt{Properties.lean}
  & Re-export hub \\
\midrule
\texttt{Nagata/}
  & \texttt{Lemmas.lean}
  & \li{PrimeGenerated}, \li{Avoids}, all transfer lemmas \\
  & \texttt{Theorem.lean}
  & Concrete and abstract Nagata theorem families \\
\midrule
\texttt{Applications/}
  & \texttt{Laurent.lean}
  & Laurent-polynomial route and iterated-polynomial corollary \\
  & \texttt{FractionField.lean}
  & Fraction-field route to the polynomial UFD theorem \\
  & \texttt{Gauss.lean}
  & Gauss-lemma wrappers \\
  & \texttt{Examples.lean}
  & Thin wrappers around Mathlib instances \\
\bottomrule
\end{tabular}
\end{table}

\subsection{Design principles}

\paragraph{Concrete localization.}
Mathlib provides two views of localization: the concrete quotient type
\li{Localization\,S} and the abstract typeclass \li{IsLocalization}.  Our
development now uses both directly.  The helper file
\lf{Localization/IsLocalization.lean} provides the abstract denominator-clearing
and unit/divisibility interface, and the central transport lemmas in
\lf{Nagata/Lemmas.lean} are stated for arbitrary \li{IsLocalization} targets.
The concrete \li{Localization\,S} API remains useful as a canonical target type
and for the concrete theorem names.

\paragraph{Fact-based zero exclusion.}
The condition $0\notin S$ is carried as a \li{Fact} instance rather than as an
explicit hypothesis on each lemma.  This reduces hypothesis clutter and allows
Lean's typeclass machinery to supply the proof automatically once established
from the prime-generated hypothesis.

\paragraph{Dual proof chains.}
The file \lf{Nagata/Lemmas.lean} contains two parallel chains of transfer
lemmas: one under the prime-generated hypothesis and one under the
prime-or-unit hypothesis.  The prime-or-unit chain is simpler---it does not
require multiset induction---and serves as a self-contained proof for the
single-generator case.  The two chains share the \li{Avoids} predicate and
localization API but are otherwise independent.

\paragraph{Separation of concerns.}
The \texttt{Basic/} module re-proves or wraps several Mathlib results as named
lemmas in the project namespace.  While redundant from a library perspective,
these wrappers stabilize the interface against Mathlib API changes and make the
proof scripts more readable by giving short, descriptive names to frequently
used facts.

\subsection{The localization wrapper API}\label{sec:loc-api}

The file \lf{Localization/Localization.lean} provides a thin API layer over
Mathlib's localization, exposing four primitive operations:
\begin{description}[nosep,leftmargin=1em,style=nextline]
\item[\texttt{Localization.mk\,(a : R)\,(s : S)}.]
  The raw constructor, producing the fraction $a/s$.
\item[\texttt{Localization.of\,(a : R)}.]
  The canonical ring homomorphism $\iota\colon R\to S^{-1}R$, sending
  $a\mapsto a/1$.
\item[\texttt{Localization.surj\,(z : $S^{-1}R$)}.]
  Every element of the localization can be written as $a/s$ for some $a\in R$
  and $s\in S$.  This is the formal version of the ``clearing denominators''
  step used throughout the Nagata argument.
\item[\texttt{Localization.mk\_eq\_iff}.]
  Equality in the localization: $a/s = b/t$ if and only if $at = bs$ (using
  the domain hypothesis to simplify the general equivalence relation).
\end{description}

\noindent
Two further results in the same file play a structural role:
\begin{itemize}[nosep]
\item \li{algebraMap\_injective}: the canonical map $\iota$ is injective when
  $0\notin S$, which ensures that $R$ embeds faithfully into $S^{-1}R$.
\item \li{dvd\_of\_iff}: the divisibility characterization
  $\iota(a)\mid\iota(b) \Leftrightarrow \exists\,s\in S,\;a\mid s\cdot b$,
  which is the main computational interface between the localization and the
  ring-side arguments.
\end{itemize}

\noindent
The zero-exclusion infrastructure resides in \lf{Localization/MultSet.lean},
which establishes that if every element of~$S$ is prime or a unit (or, more
generally, if $S$ is prime-generated), then $0\notin S$, every member of~$S$
is a non-zero-divisor, and $S$ embeds into the submonoid of non-zero-divisors.
These results are registered as \li{Fact} instances so that downstream lemmas
can access the domain structure on~$S^{-1}R$ without threading the hypothesis
explicitly.

\subsection{Dependency structure and size}

Table~\ref{tab:loc} gives a per-file breakdown of all Lean source files in the
artifact.  The dominant module is \lf{Nagata/Lemmas.lean}, which still accounts
for the largest share of the code.  This is expected: the transfer lemmas
involve the most intricate case analysis and multiset induction in the project.

\begin{table}[ht]
\centering
\caption{Per-file line counts (Lean source lines).}\label{tab:loc}
\smallskip
\begin{tabular}{@{}lr@{\quad}lr@{}}
\toprule
\textbf{File} & \textbf{Lines} & \textbf{File} & \textbf{Lines} \\
\midrule
\texttt{Nagata/Lemmas.lean}             & 651
  & \texttt{Localization/IsLocalization.lean} & 115 \\
\texttt{Localization/Localization.lean} & 177
  & \texttt{Nagata/Theorem.lean}              & 100 \\
\texttt{Applications/FractionField.lean} & 144
  & \texttt{Applications/Laurent.lean}        & 109 \\
\texttt{Basic/UFD.lean}                 & 64
  & \texttt{Basic/Divisibility.lean}          & 76  \\
\texttt{Localization/MultSet.lean}      & 30
  & \texttt{Applications/Examples.lean}       & 23  \\
\texttt{Applications/Gauss.lean}        & 21
  & \texttt{Basic/Noetherian.lean}            & 15  \\
\texttt{Applications.lean}              & 4
  & \texttt{Basic.lean}                       & 4   \\
\texttt{Localization.lean}              & 4
  & \texttt{Basic/Ring.lean}                  & 5   \\
\texttt{Nagata.lean}                    & 2
  & \texttt{Localization/Properties.lean}    & 2   \\
\bottomrule
\end{tabular}
\end{table}

\noindent
The dependency graph is strictly layered: \texttt{Basic/} depends only on
Mathlib; \texttt{Localization/} depends on \texttt{Basic/} and Mathlib;
\texttt{Nagata/} depends on \texttt{Localization/}; and
\texttt{Applications/} depends on \texttt{Nagata/}.  No circular dependencies
exist, and the layering ensures that each module can be type-checked
independently once its predecessors are built.

\section{Applications: two Nagata routes to polynomial UFDs}\label{sec:application}

The application layer now contains two Nagata-based routes to the polynomial
UFD theorem.  The first, in \lf{Applications/Laurent.lean}, localizes at powers
of the indeterminate.  The second, in \lf{Applications/FractionField.lean},
localizes at the constant primes and compares with the fraction-field
polynomial ring.  Both genuinely depend on the formalized Nagata machinery.

\begin{corollary}\label{cor:poly}
If $R$ is a noetherian UFD, then the polynomial ring $R[X]$ is a UFD.
\end{corollary}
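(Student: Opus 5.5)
We plan to prove Corollary~\ref{cor:poly} with the Laurent route, reducing it to Theorem~\ref{thm:nagata} for the ring $A=R[X]$ and the submonoid $S=\{1,X,X^2,\ldots\}$. First we check the hypotheses on $A$. It is an integral domain because $R$ is one. It is noetherian by the Hilbert basis theorem, which we take from Mathlib. The element $X$ is prime in $R[X]$, since $R[X]/(X)\cong R$ and $R$ is a domain. So \li{primeGenerated\_powers} shows that $S$ is prime-generated. It remains to show that $S^{-1}A$ is a UFD. Here $S^{-1}A$ is canonically the Laurent polynomial ring $R[T,T^{-1}]$, so we use the abstract front-end \li{nagata\_theorem\_isLocalization}. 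That needs an \li{IsLocalization} instance for $R[X]\to R[T;T^{-1}]$ at the powers of $X$, which Mathlib provides or which follows from the universal property.

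The main obstacle is the UFD property of $R[T,T^{-1}]$ itself, i.e.\ \li{laurentPolynomial\_uniqueFactorizationMonoid}. We must not derive it from $R[X]$ being a UFD, since that would be circular. We would first try the identification $R[T,T^{-1}]\cong R[T]_{T}$ together with the fact that localizations of UFDs are UFDs. This still presupposes that $R[T]$ is a UFD, which is acceptable here only because Mathlib already has the Gauss-lemma instance (\li{Polynomial.uniqueFactorizationMonoid}, wrapped in \lf{Applications/Gauss.lean}).

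If we want the argument to be honest and non-circular, we switch to the second route. Let $\mathcal{P}$ be the set of constant primes $p\in R\subseteq R[X]$. Each such $p$ stays prime in $R[X]$, because $R[X]/(p)\cong(R/p)[X]$ is a domain. Hence $\mathrm{closure}(\mathcal{P})$ is prime-generated by \li{primeGenerated\_closure\_of\_primes}. Since $R$ is a UFD, every nonzero element of $R$ is a unit times a product of primes. So the localization of $R[X]$ at $\mathrm{closure}(\mathcal{P})$ inverts all nonzero constants and equals $\operatorname{Frac}(R)[X]$. That ring is a PID and hence a UFD. Then \li{nagata\_theorem\_of\_prime\_generators\_isLocalization} gives the conclusion.

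The delicate step on this route is establishing \li{IsLocalization} for $R[X]\to\operatorname{Frac}(R)[X]$. Surjectivity modulo denominators requires clearing the coefficient denominators by their product, and then absorbing units of $R$: a nonzero $d\in R$ equals $u$ times an element of the closure. We would do this by writing $d^{-1}=u^{-1}s^{-1}$ and pushing $u^{-1}$ into the numerator.
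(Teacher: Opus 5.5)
Your proposal is correct and matches the paper. The argument you finally commit to is the paper's second (fraction-field) route, and your worry about the Laurent route is accurate: the paper's Step~3 builds the UFD structure on $R[X,X^{-1}]$ directly from Mathlib's existing UFD instance on $R[X]$, so that route already uses the conclusion. The only difference is in the localization bookkeeping: you plan to verify \li{IsLocalization} for $R[X]\to\operatorname{Frac}(R)[X]$ at the closure by hand, whereas the paper uses the same ``unit times product of constant primes'' fact to show that \li{Localization\,S} is also a localization at all nonzero constants, then compares it with Mathlib's \li{Polynomial.isLocalization} via \li{IsLocalization.algEquiv}, which avoids the manual denominator clearing.
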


\noindent
The proof assembles five formal ingredients.

\subsection{Step~1: the submonoid is prime-generated}

Let $S=\{1,X,X^2,\ldots\}\subseteq R[X]$ be the submonoid of powers of the
indeterminate.  The element~$X$ is prime in~$R[X]$ (formalized as a wrapper
around Mathlib's \li{Polynomial.prime\_X}).  By
\li{primeGenerated\_powers}, the submonoid of powers of a prime element is
prime-generated.

\subsection{Step~2: the localization is the Laurent polynomial ring}

Mathlib provides the fact that the Laurent polynomial ring $R[X,X^{-1}]$
is a localization of~$R[X]$ at the powers of~$X$.  This is recorded as a
wrapper around \li{LaurentPolynomial.isLocalization}.

\subsection{Step~3: the Laurent polynomial ring is a UFD}

The theorem \li{laurentPolynomial\_uniqueFactorizationMonoid} proves that
if~$R$ is a UFD, then $R[X,X^{-1}]$~is a UFD.  This is the most substantial
component of the application and does not follow from a Mathlib instance.

The proof works with the Mathlib-provided UFD structure on~$R[X]$ and
transfers it to $R[X,X^{-1}]$ by showing that every nonzero element of the
Laurent ring has a prime factorization.  Given $f\in R[X,X^{-1}]$ with
$f\ne 0$, one writes $f\cdot X^n = p$ for some polynomial $p\in R[X]$ and
exponent~$n$, using the Mathlib lemma \li{LaurentPolynomial.exists\_T\_pow}.
One then separates the $X$-adic part of~$p$ from its $X$-free part:
$p=X^m\cdot q$ where $X\nmid q$.

The polynomial~$q$ admits a prime factorization in~$R[X]$; each prime
factor~$r_i$ satisfies $r_i\nmid X$.  An auxiliary lemma shows that if a
polynomial~$r$ is prime in~$R[X]$ and does not divide~$X$, then its image
in $R[X,X^{-1}]$ is prime.  This uses the ideal-theoretic criterion: it shows
that the image of the principal ideal $(r)$ under the localization map remains
prime, via Mathlib's \li{IsLocalization.isPrime\_of\_isPrime\_disjoint} and the
disjointness of $(r)$ from the powers of~$X$.

Applying this to each prime factor of~$q$ produces a prime factorization of~$f$
in $R[X,X^{-1}]$ (after absorbing the powers of~$X$ as units).

\subsection{Step~4: descending from the localization}

The theorem \li{polynomial\_uniqueFactorizationMonoid\_of\_laurent} shows that
if~$R$ is noetherian and $R[X,X^{-1}]$ is a UFD, then $R[X]$ is a UFD.  This
is now a direct invocation of the abstract wrapper
\li{nagata\_theorem\_isLocalization} with the submonoid~$S$ from Step~1.
Mathlib already supplies an \li{IsLocalization.Away} structure on the Laurent
polynomial ring, so once the prime-generated witness is in place, the descent
step is expressed entirely at the abstract localization interface.

\subsection{Step~5: the composite theorem}

Finally, \li{polynomial\_uniqueFactorizationMonoid\_via\_nagata} combines
Steps~3 and~4: given a noetherian UFD~$R$, it constructs the Laurent UFD
instance and invokes Step~4 to obtain the polynomial UFD instance.

\subsection{A second route: localization at constant primes}

The file \lf{Applications/FractionField.lean} proves the same polynomial UFD
result by a different Nagata argument.  Let
\[
  S = \langle\, C(p) \mid p \in R,\; p \text{ prime}\,\rangle \subseteq R[X]
\]
be the submonoid generated by the constant primes.  This submonoid is
prime-generated by the generic closure lemma
\li{primeGenerated\_closure\_of\_primes}.

The key new step is to show that every nonzero constant polynomial becomes a
unit in~$S^{-1}R[X]$.  Since~$R$ is a UFD, every nonzero coefficient factors
into prime elements, and by construction their constant images already lie
in~$S$.  It follows that the concrete localization \li{Localization\,S} is also
an \li{IsLocalization} of~$R[X]$ at the larger submonoid $(R^\circ).\mathrm{map}\,C$
of nonzero constant polynomials.

Mathlib's theorem \li{Polynomial.isLocalization} identifies
$\operatorname{Frac}(R)[X]$ as a localization at that larger submonoid.  The
two localization presentations are compared by \li{IsLocalization.algEquiv},
which transports the UFD structure from $\operatorname{Frac}(R)[X]$ (a
polynomial ring over a field) back to~\li{Localization\,S}.  Applying the
Nagata theorem then yields a second proof of Corollary~\ref{cor:poly}.

This second route is not mathematically stronger than the Laurent argument; its
value is that it exercises the same Nagata package against a genuinely
different localization choice.

\subsection{A reuse corollary: iterated polynomial rings}

\begin{corollary}\label{cor:iterpoly}
If $R$ is a noetherian UFD, then the iterated polynomial ring $R[X][Y]$ is a
UFD.
\end{corollary}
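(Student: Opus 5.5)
The plan is to apply Corollary~\ref{cor:poly} twice, with the Nagata package reused through \li{polynomial\_uniqueFactorizationMonoid\_via\_nagata}, once over $R$ and once over $R[X]$. Write $A=R[X]$.

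The first application is direct. Since $R$ is a noetherian UFD, the composite theorem of Step~5 yields a \li{UniqueFactorizationMonoid} instance on $A$.

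For the second application, the composite theorem needs three structures on $A$: it must be a domain, it must be noetherian, and it must be a UFD.
\begin{itemize}[nosep]
\item $A$ is a domain by Mathlib's standard instance for polynomial rings over a domain.
\item $A$ is noetherian by the Hilbert basis theorem, which Mathlib provides as \li{Polynomial.isNoetherianRing}.
\item $A$ is a UFD by the first step.
\end{itemize}
With these in hand, the same theorem applied to $A$ gives a UFD instance on $A[Y]=\mathrm{Polynomial}(\mathrm{Polynomial}\,R)$. Unwinding this, the second application again localizes at the powers of $Y$. It then uses \li{laurentPolynomial\_uniqueFactorizationMonoid} over $A$ to make $A[Y,Y^{-1}]$ a UFD, and finally descends via \li{nagata\_theorem\_isLocalization} together with \li{primeGenerated\_powers} for the prime $Y$.

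The only step that needs care is making instances line up. The UFD instance on $A$ produced in the first step is a local \li{have}, and it must be the one used in the second call. Mathlib also already carries its own global \li{UniqueFactorizationMonoid} instance on $R[X]$, derived from normalization structures. I would pass the instance explicitly, or introduce it with \li{letI}, and rely on the fact that \li{UniqueFactorizationMonoid} is a \li{Prop}-valued class. Because it is a \li{Prop}, any two instances are definitionally equal by proof irrelevance, so no diamond can arise.

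The \li{CancelCommMonoidWithZero} structure on the Laurent ring over $A$ is then found by typeclass inference exactly as in the single-variable case. I therefore expect the whole proof to be a few lines, and the main obstacle to be purely this elaboration and instance-resolution bookkeeping rather than any mathematics.
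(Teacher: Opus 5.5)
Your proposal is correct and matches the paper's proof. The paper also applies the Nagata-based polynomial UFD theorem once over $R$ to get a UFD structure on $R[X]$, then again with base ring $R[X]$, with the domain and noetherian hypotheses (Hilbert basis theorem) supplied by existing Mathlib instances. Your further observation is also sound: \li{UniqueFactorizationMonoid} is \li{Prop}-valued, so choosing between the Nagata-derived instance and Mathlib's own instance on $R[X]$ cannot cause a diamond.
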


\noindent
The corresponding Lean theorem appears in the Laurent-application file.  Its
proof is short but mathematically
meaningful.  One first applies the Nagata-based polynomial UFD theorem to
obtain a UFD structure on $R[X]$, and then applies the same result again with
base ring $R[X]$.  Since polynomial rings over noetherian rings are again
noetherian, all hypotheses needed for the second step are discharged by
existing Mathlib instances.

The point of this corollary is not difficulty but reuse.  Corollary~\ref{cor:poly}
shows that the Nagata package proves a standard theorem.  Corollary~\ref{cor:iterpoly}
shows that the same package can be invoked again, unchanged, to produce a new
UFD result one level higher in the ring tower.  This makes the formalized
theorem look less like a one-off proof script and more like a reusable
descent principle.

\begin{remark}
This result is also provable via the Gauss lemma (and is already available in
Mathlib by that route).  The file \lf{Applications/Gauss.lean} wraps the
Mathlib instance to make the comparison explicit.  The value of the
Nagata-based proof is not that it produces a new result, but that it exercises
the formalized theorem in a setting where the prime-generated localization
machinery is genuinely needed: the proof uses the full infrastructure
(prime-generation, avoidance, transfer of irreducibility and primality) rather
than reducing to a thin wrapper.
\end{remark}

\subsection{The primality transfer for Laurent polynomials}\label{sec:laurent-prime}

The most technically interesting ingredient in Step~3 is the primality
transfer from $R[X]$ to $R[X,X^{-1}]$.  The auxiliary lemma (formalized as
\li{polynomial\_toLaurent\_prime\_of\_not\_dvd\_X}) states:

\begin{lemma}\label{lem:laurent-prime}
Let $R$ be a UFD and let $r\in R[X]$ be a prime polynomial that does not
divide~$X$.  Then the image of~$r$ in $R[X,X^{-1}]$ is prime.
\end{lemma}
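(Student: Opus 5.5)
We work ideal-theoretically, as the surrounding text indicates. Let $S=\{1,X,X^2,\ldots\}\subseteq R[X]$ and let $\iota\colon R[X]\to R[X,X^{-1}]$ be the localization map supplied by \li{LaurentPolynomial.isLocalization}. Since $r$ is prime, the principal ideal $\mathfrak p=(r)$ is a prime ideal of $R[X]$. The goal is to show that $\iota(\mathfrak p)\,R[X,X^{-1}]$ is a prime ideal. It equals the principal ideal $(\iota(r))$, and $\iota(r)\neq 0$ because $\iota$ is injective. A nonzero element generating a prime ideal is prime, so this suffices.

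The key step is disjointness of $\mathfrak p$ from $S$, i.e.\ $r\nmid X^n$ for every $n$. We argue by induction on $n$, using primality of $r$. For $n=0$: $r\mid 1$ would make $r$ a unit, which primes are not. For the inductive step: if $r\mid X\cdot X^{n}$, primality gives $r\mid X$ or $r\mid X^n$. The first is excluded by hypothesis and the second by induction. This is exactly the avoidance pattern of Definition~\ref{def:avoids}, specialized to a principal prime.

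With disjointness in hand, we invoke \li{IsLocalization.isPrime\_of\_isPrime\_disjoint}. It gives that $\mathrm{Ideal.map}\,\iota\,\mathfrak p$ is prime in $R[X,X^{-1}]$. Then we rewrite $\mathrm{Ideal.map}\,\iota\,(\mathrm{span}\{r\})=\mathrm{span}\{\iota(r)\}$ via \li{Ideal.map\_span} and \li{Set.image\_singleton}. Finally we conclude with \li{Ideal.span\_singleton\_prime} (using $\iota(r)\ne0$).

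The UFD hypothesis on $R$ is not actually needed; only that $R$ is a domain, so that $\iota$ is injective and $R[X]$ is a domain. Alternatively, one could argue elementwise as in Lemma~\ref{lem:transfer-prime}: clear denominators $T^{-k}$ and peel off powers of $X$ using $r\nmid X$. That is essentially Lemma~\ref{lem:lift-dvd} with $S$ the powers of the prime $X$.

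The main obstacle is not mathematical but interface-level: matching Mathlib's Laurent-polynomial embedding (\li{Polynomial.toLaurent}) with the \li{algebraMap} of the \li{IsLocalization} instance, so that the ideal-map lemmas apply. We also need $\iota(r)\ne 0$, which follows from \li{Polynomial.toLaurent\_injective} and $r\neq0$.
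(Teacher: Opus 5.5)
Your proposal is correct and takes the same route as the paper: extend the prime ideal $(r)$ along the localization map via \texttt{IsLocalization.isPrime\_of\_isPrime\_disjoint}, using primality of $r$ together with $r\nmid X$ to get disjointness from the powers of $X$. Your additional details are accurate refinements of the paper's sketch: the $n=0$ case, the identification $\mathrm{Ideal.map}\,\iota\,(r)=(\iota(r))$ with $\iota(r)\neq 0$, and the observation that only the domain hypothesis on $R$ (not the UFD hypothesis) is used.
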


\noindent
The proof uses the ideal-theoretic criterion for primality: it suffices to show
that the image of the principal ideal~$(r)\subseteq R[X]$ under the
localization map is a prime ideal of~$R[X,X^{-1}]$.  Mathlib provides the
result \li{IsLocalization.isPrime\_of\_isPrime\_disjoint}, which states that
the extension of a prime ideal to a localization remains prime whenever the
ideal is disjoint from the multiplicative set.  The condition $r\nmid X$
ensures precisely that $(r)$ is disjoint from the powers of~$X$: if
$X^n\in(r)$ for some~$n$, then $r\mid X^n$, so $r\mid X$ (since $r$ is
prime), contradicting the hypothesis.

This lemma illustrates a general pattern in localization-based arguments: the
``interesting'' primes in a localization are those that come from primes in the
base ring whose principal ideals avoid the multiplicative set.  The powers
of~$X$ that are absorbed as units in $R[X,X^{-1}]$ are precisely the
``uninteresting'' primes; all other primes survive the localization.

\subsection{Concrete instantiations}\label{sec:examples}

The file \lf{Applications/Examples.lean} records several concrete consequences
of the polynomial-ring theorem:

\begin{enumerate}[nosep]
\item $\ZZ$ is a UFD (a direct wrapper around Mathlib's instance).
\item $\ZZ[X]$ is a UFD (combining the Mathlib integer UFD instance with the
  polynomial-ring theorem).
\item For any field~$k$, the multivariate polynomial ring
  $k[X_1,\ldots,X_n]$ is a UFD (using Mathlib's
  \li{MvPolynomial.uniqueFactorizationMonoid}).
\end{enumerate}

\noindent
These examples are deliberately thin---each is a one- or two-line invocation
of existing Mathlib infrastructure---but they serve as machine-checkable
witnesses that the formalization connects cleanly to the broader library
ecosystem.  In particular, the $\ZZ[X]$ example can be proved either via the
Gauss lemma (Mathlib's default route) or via the Nagata theorem (our route
through $\ZZ[X,X^{-1}]$), providing a concrete comparison point.  By contrast,
Corollary~\ref{cor:iterpoly} above is a genuinely theorem-driven second
application of the Nagata package itself.

\section{Lessons from the formalization}\label{sec:lessons}

Several aspects of the formal development may be of interest to the
proof-engineering community beyond the specific theorem.

\subsection{Getting the hypothesis right}\label{sec:hyp-right}

The most consequential design decision was the replacement of the
prime-or-unit hypothesis with the prime-generated one.  The prime-or-unit
condition is superficially cleaner---it avoids quantifying over
multisets---but it is essentially degenerate for submonoids with more than one
non-unit prime generator, as discussed in Section~\ref{sec:degeneracy}.

The formalization made this defect visible earlier and more sharply than a
textbook exposition would.  The initial development compiled and type-checked
under the prime-or-unit hypothesis; all transfer lemmas were proved correctly.
The problem surfaced only at the application stage: the powers-of-$X$ submonoid
satisfies the prime-generated condition but \emph{not} the prime-or-unit
condition, since $X^2\in S$ is neither prime nor a unit in~$R[X]$.  In a
pen-and-paper development, one might paper over this gap by implicitly using the
prime-generated formulation in the application while stating the theorem with
the prime-or-unit hypothesis.  The formal setting does not allow this.

The mathematical lesson is that a condition on individual elements of a
submonoid is the wrong abstraction level when the argument requires factoring
products of denominators; one must instead ask for a \emph{factorization
property} of the submonoid.  This insight is not original---textbooks that
treat the general case state the theorem with a prime-factorization hypothesis
from the outset---but the formalization provided a concrete, machine-checkable
failure mode.

\subsection{Transfer lemmas as conditional infrastructure}\label{sec:transfer}

The divisibility, irreducibility, and primality transfer lemmas are not
unconditional equivalences between~$R$ and~$S^{-1}R$.  They hold under the
specific combination of prime-generation and prime-avoidance hypotheses that
arises in the Nagata argument.  Formalizing them as self-contained lemmas
(rather than inlining them into the main proof) required identifying exactly
which hypotheses each transfer result needs.

For example, the divisibility lift (Lemma~\ref{lem:lift-dvd}) requires both
prime-generation of~$S$ and avoidance by~$p$, while the primality transfer
(Lemma~\ref{lem:transfer-prime}) additionally requires that $p/1$ be prime in
the localization.  In the key lemma, the avoidance hypothesis is supplied by
one branch of the case split, and the localization-side primality comes from
the UFD hypothesis---but the transfer lemma itself does not know about either
source.  This decomposition is valuable not because each lemma is deep, but
because the precise statement makes its reuse conditions explicit.

\subsection{Multiset induction as the core combinatorial engine}

The most labor-intensive part of the formalization is the multiset induction in
the transfer lemmas.  When clearing a denominator $s=q_1\cdots q_n$ from a
fraction, the argument peels off one prime factor at a time, using primality
of~$q_i$ to redirect the factor to the appropriate side of a product equation.
This combinatorial bookkeeping---tracking which factors have been consumed,
maintaining the inductive invariant, and handling the base case---is routine on
paper but requires explicit management in the formal setting.

The auxiliary lemma \li{split\_prime\_factors\_of\_mul\_eq} (which partitions a
multiset of prime factors across the two sides of a product equation) is the
longest single proof in the development.  Its proof is a nested induction:
the outer induction is on the multiset, and at each step the primality of the
current factor determines which side of the partition absorbs it.  The
existence of this lemma as a self-contained, reusable result is a
formalization artifact---on paper one would simply say ``by induction on the
number of prime factors''---but it clarifies the precise combinatorial content
of the transfer-of-irreducibility argument.

\subsection{Automation and transparency}

Lean's tactic automation handles routine algebraic reasoning, but the key proof
obligations---particularly the case analysis on irreducible elements dividing
products of primes---require explicit combinatorial arguments beyond
\li{simp} or \li{ring}.  This is a feature, not a limitation: the formal proof
preserves exactly the case distinctions that constitute the mathematical
content.

The most effective automation patterns in this development were:
\begin{itemize}[nosep]
\item \li{simp} with commutativity and associativity lemmas for rearranging
  products.
\item \li{ac\_rfl} for closing goals that differ only by
  associativity/commutativity.
\item Lean's equation compiler for structural recursion on multisets (via
  \li{Multiset.induction\_on}).
\end{itemize}

\subsection{Localization ergonomics}

Working with Mathlib's localization API involves navigating between the abstract
\li{IsLocalization} typeclass and concrete \li{Localization} instances.  The
project uses a two-layer approach: abstract interface lemmas handle
divisibility, units, and surjectivity uniformly across localization targets,
while the concrete quotient remains useful when one wants a canonical target
type with definitional fraction constructors.

A key design choice was the \li{dvd\_of\_iff} lemma, which characterizes
divisibility of images in the localization:
\[
  \iota(a)\mid\iota(b)
  \;\;\Longleftrightarrow\;\;
  \exists\,s\in S,\;a\mid s\cdot b.
\]
This equivalence is the interface through which the Nagata transfer lemmas
access the localization: they never manipulate fractions directly, but instead
use it to translate localization-side divisibility into a ring-side equation
amenable to multiset induction.

The resulting architecture is genuinely mixed rather than wrapper-only.
\lf{Localization/IsLocalization.lean} provides the abstract API.
\lf{Nagata/Lemmas.lean} carries the transport chain at the
\li{IsLocalization} level, and the concrete \li{Localization\,S} theorems are
recovered as specializations.  This keeps the downstream application files
short while making the core theorem family available from arbitrary
localizations.

\subsection{Formalization effort and Lean~4 ergonomics}

The current formalization comprises 1546 lines of Lean~4 source across 18
files, with 97 formally verified theorem/lemma declarations.  It contains zero
uses of \li{sorry}, \li{admit}, \li{axiom}, or \li{native\_decide}.  The
development targets Lean~4.24.0 and Mathlib as pinned by
\lf{lean-toolchain} and \lf{lake-manifest.json}.

Several Lean~4-specific ergonomic issues shaped the proof style:

\begin{itemize}[nosep]
\item \textbf{Typeclass diamonds.}  The \li{Localization} type carries
  multiple paths to the commutative-monoid-with-zero structure, which
  occasionally forces the use of explicit \li{@} notation to disambiguate
  instances.  The theorem statements in Section~\ref{sec:lean-statements} show
  this pattern.
\item \textbf{Multiset induction.}  Key transfer lemmas reason by induction on
  the multiset of prime factors.  Lean's \li{Multiset.induction\_on} tactic
  handles the base and cons cases, but the recursive step often requires manual
  \li{calc} blocks to guide the simplifier.
\item \textbf{Universe polymorphism.}  All definitions and theorems are
  universe-polymorphic, which is necessary for eventual Mathlib compatibility
  but requires care in stating auxiliary lemmas to avoid universe
  metavariable errors.
\item \textbf{Zero exclusion.}  Several transfer lemmas require $0\notin S$ as
  a hypothesis.  In Lean~4, this is expressed as a \li{Fact} instance, which
  must be registered in the local context before typeclass inference can
  construct the localization's \li{IsDomain} instance.  This boilerplate
  appears in the key lemma and in the main theorem's proof.
\end{itemize}

\section{Related work}\label{sec:related}

\subsection{Mathematical background}

The result first appeared in a short note by Nagata~\cite{nagata1957}, which
proved that a noetherian domain whose localization at a multiplicatively closed
set of primes is a UFD is itself a UFD.  Nagata subsequently included the
theorem in his monograph on local rings~\cite{nagata1962}.  Samuel's Tata
lectures~\cite{samuel1964} give a particularly clean treatment of the
single-prime-generator case and the polynomial-ring application.  Matsumura's
textbook~\cite{matsumura1989} and Kaplansky's \emph{Commutative
Rings}~\cite{kaplansky1970} present the result in full generality for
submonoids generated by primes, and our formulation follows this tradition.

The Stacks Project~\cite{stacks} provides a modern, self-contained exposition
of the theorem (Tag~0AFU) in the language of commutative algebra, including a
proof that any noetherian normal domain whose class group is trivial is a UFD.
We followed the argument structure closest to Matsumura's and Samuel's accounts.

\subsection{The Lean theorem prover and Mathlib}

Our development is built in Lean~4~\cite{moura-lean4}, a dependently typed
functional programming language and interactive theorem prover.  The
mathematical foundation is provided by Mathlib~\cite{mathlib}, the
community-maintained library of formalized mathematics for Lean~4.

Mathlib supplies all the algebraic infrastructure on which this project rests:
the \li{Localization} type and \li{IsLocalization} typeclass; the
\li{UniqueFactorizationMonoid} class and its characterizations; the
\li{WfDvdMonoid} instance for noetherian domains; and the polynomial and
Laurent polynomial rings with their ring-theoretic structure.  Our contribution
is the formal packaging of a Nagata-style transfer theorem and the proof
architecture needed to make it applicable.  The project depends on Mathlib but
does not modify it.

\subsection{Formal algebra in other proof assistants}

In Coq, the Mathematical Components library~\cite{mathcomp} offers a mature
algebraic environment with strong support for divisibility and polynomial
algebra, including infrastructure for Gauss-lemma-style and UFD-style arguments
over polynomial rings.  The algebraic hierarchy is built on the SSReflect proof
style, which favors boolean decision procedures over classical logic.

In Isabelle/HOL, Bordg~\cite{bordg-localization} formalizes localization of
commutative rings in the Archive of Formal Proofs, providing the
ring-of-fractions construction and its universal property.
Divas{\'o}n et al.~\cite{divason-berlekamp} formalize the
Berlekamp--Zassenhaus factorization algorithm, demonstrating substantial
factorization infrastructure, though their focus is algorithmic factorization
rather than structural factoriality theorems.

\subsection{Related formalization projects in Lean}

The closest thematic relative in Lean/Mathlib is the formalization of Dedekind
domains and class groups by Baanen et al.~\cite{baanen-dedekind}, which
also develops nontrivial algebraic infrastructure on top of Mathlib's
localization and ideal theory.  Their work includes formalized proofs about
fractional ideals, the class group of a global field, and the finiteness of
the class number, all of which require careful interaction with localization
machinery similar to that needed for the Nagata theorem.

Among recent Annals of Formalized Mathematics publications, Brasca
et al.~\cite{brasca-flt} formalize Fermat's Last Theorem for regular primes
in Lean~4, including a proof of Kummer's lemma via Hilbert's Theorems~90--94.
Riou~\cite{riou-derived} formalizes derived categories and triangulated
structures in Lean/Mathlib.  Loeffler and Stoll~\cite{loeffler-zeta}
formalize the theory of zeta and $L$-functions, including a proof of
Dirichlet's theorem on primes in arithmetic progressions.  These projects
share the methodological characteristic of building substantial mathematical
results on top of Mathlib's existing infrastructure, and they illustrate the
current state of the art for formal commutative algebra and number theory in
Lean~4.

\subsection{Novelty statement}

No public formalization of Nagata's factoriality theorem is known to us in
Lean/Mathlib, Coq/MathComp, or Isabelle/AFP.  This is a statement about the
public state of these libraries at the time of writing, not a claim that no
unpublished or in-progress formalization exists.

To sharpen the distinction: Bordg's Isabelle work and Mathlib's own
localization API provide generic \emph{localization infrastructure} but no
factoriality transfer theorem.  The MathComp Gauss-lemma proof establishes
$R[X]$ is a UFD by a different route that does not involve localization at all.
Baanen et al.'s Dedekind-domain formalization shares localization machinery but
targets class groups and fractional ideals, not factoriality descent.  The
present work fills the specific gap of a reusable \emph{localization-to-base}
UFD transfer theorem.

\section{Prospective library contributions}\label{sec:upstream}

We classify the results by Mathlib readiness.  The localization helper API is
directly upstreamable, and the \li{PrimeGenerated} predicate plus its closure lemmas are natural first
candidates.  Since the transport chain and theorem family are now formulated at
the abstract \li{IsLocalization} level, the remaining Mathlib question is
primarily one of packaging rather than refactoring.

A concrete split would be:
\begin{enumerate}[nosep]
\item \textbf{PR1: prime-generated infrastructure.}  Move
  \li{PrimeGenerated}, \li{Avoids}, the zero-exclusion and powers/closure
  lemmas, and the multiset bookkeeping that does not depend on localization.
  In the current artifact, this is the generic portion of
  \lf{Nagata/Lemmas.lean} together with the supporting lemmas from
  \lf{Localization/MultSet.lean}.
\item \textbf{PR2: abstract localization transport API.}  Move the helper
  localization lemmas together with the abstract divisibility,
  irreducibility, and primality transport lemmas.  Concretely, this includes
  the \li{surj}/\li{mk'\_eq\_iff}/\li{dvd\_map\_iff} helper layer and the
  \li{\_isLocalization}-suffixed transfer lemmas for the prime-generated and
  prime-or-unit chains; in the current artifact these live in
  \lf{Localization/IsLocalization.lean} and \lf{Nagata/Lemmas.lean}.
\item \textbf{PR3: Nagata theorem family.}  Move the abstract theorem family in
  \lf{Nagata/Theorem.lean}, headed by
  \li{nagata\_theorem\_isLocalization}, together with the closure and
  finite-prime-generator corollaries.  The application files can remain outside
  the PR sequence as downstream demonstrations.
\end{enumerate}

\noindent
We do not claim that these PRs have already been opened.  The point is that the
artifact now supports a reviewable split rather than a single monolithic
upstreaming story.  The Nagata theorem itself remains the paper-level
contribution; whether it belongs in Mathlib depends on community demand.

\section{Artifact and reproducibility}\label{sec:artifact}

\subsection{Artifact summary}

The complete formalization is publicly available at:
\begin{center}
\url{https://github.com/Arthur742Ramos/NagataFactoriality}
\end{center}
The artifact described here is the repository snapshot accompanying this
manuscript.  The development targets Lean~4.24.0, builds with Lake (the Lean
build system), and contains no \li{sorry}, \li{admit}, or \li{axiom}
placeholders.
Table~\ref{tab:stats} summarizes the artifact statistics.

\begin{table}[ht]
\centering
\caption{Artifact statistics.}\label{tab:stats}
\smallskip
\begin{tabular}{@{}lr@{}}
\toprule
\textbf{Metric} & \textbf{Value} \\
\midrule
Lean source files              & 18   \\
Lines of Lean source           & 1546 \\
Theorem/lemma declarations     & 97   \\
\li{sorry}/\li{admit}/\li{axiom} & 0   \\
Lean version                   & 4.24.0 \\
\bottomrule
\end{tabular}
\end{table}

\subsection{Building the artifact}

The artifact builds in two steps:
\begin{lstlisting}[style=lean]
lake exe cache get     -- fetch prebuilt Mathlib oleans
lake build             -- build the project
\end{lstlisting}
The first command fetches prebuilt object files for the Mathlib dependency,
avoiding a full rebuild of the library.  The second command compiles all
project files, including the application layer.

\subsection{Continuous integration}

The repository includes a GitHub Actions workflow that runs \li{lake build} on
every push and pull request, ensuring that the development remains buildable
against its pinned Lean toolchain and Mathlib version.

\subsection{Release packaging}

The submission artifact is a tagged source release that freezes the toolchain
pin, lock file, Lean sources, and this paper.  Citation metadata is provided in
a \li{CITATION.cff} file following the Citation File Format standard.  The same
release can be mirrored to a DOI-minting archive such as Zenodo if the venue
accepts archival artifacts.

\subsection{Verification and trust}

The trusted base for the formalization consists of the Lean~4 kernel (which
checks all proof terms), the Mathlib library (which provides algebraic
infrastructure), and the Lake build system (which orchestrates compilation).
No external oracles, custom axioms, or unverified code generation are used.
The absence of \li{sorry} and \li{admit} can be mechanically verified by
searching the source files.

\section{Conclusion and future work}\label{sec:conclusion}

We have presented, to our knowledge, the first public Lean~4 formalization of
Nagata's factoriality theorem under a prime-generated hypothesis on the
multiplicative set.  The contribution is organized around a reusable theorem
surface that combines concrete \li{Localization\,S} statements, abstract
\li{IsLocalization} theorem families, and packaged prime-generator entry
points.  The same interface supports two Nagata-based proofs of the polynomial
UFD theorem---via Laurent localization and via the fraction field---as well as
an iterated polynomial corollary obtained by direct reuse.  The formalization
also exposed that the prime-or-unit variant is too weak as a general theorem
statement; the corrected prime-generated formulation is the one that supports
reusable applications.

\paragraph{Limitations.}
The formalization covers the multiplicative version of Nagata's theorem (for
submonoids generated by primes).  The more general ideal-theoretic
formulations---such as the version involving $t$-operations or Krull
domains---are not addressed.  The two polynomial applications are intended to
demonstrate reuse of the theorem package rather than to exhaust its range of
applications; broader downstream case studies would test the abstraction
further.

\paragraph{Future work.}
Several directions are natural continuations:
\begin{enumerate}[nosep]
\item \textbf{Mathlib integration.}  Pursue the concrete three-stage split from
  Section~\ref{sec:upstream}: prime-generated infrastructure first, then the
  abstract localization transport API, and finally the Nagata theorem family.
  \item \textbf{Richer applications.}  Beyond the polynomial and iterated-polynomial
  corollaries formalized here, the theorem could be applied to establish unique
  factorization in power-series rings, rings of algebraic integers, or other
  settings where a Nagata-style argument is natural.
\item \textbf{Generalization.}  The ideal-theoretic generalization of Nagata's
  theorem (involving the class group and divisorial ideals) is a natural next
  target, though it would require substantially more infrastructure than is
  currently available in Mathlib.
\item \textbf{Comparison with Gauss-lemma approaches.}  A formal comparison of
  the Nagata-based and Gauss-lemma-based proofs of the polynomial UFD theorem,
  both in proof complexity and dependency structure, would be an interesting
  study in formalization methodology.
\end{enumerate}


\end{document}